\documentclass[twoside,11pt]{article}
\usepackage{amsmath,amssymb,amsthm,amsfonts,mathrsfs,wasysym,latexsym,times,lineno, subfigure,color}
\usepackage{amsmath,amsfonts}
\usepackage{amsthm}
\usepackage{graphicx}
\usepackage{color}
\usepackage{multicol}
\usepackage{amsmath}
\allowdisplaybreaks[4]


\topmargin=-0.1in \oddsidemargin3mm \evensidemargin3mm
\textheight220mm \textwidth160mm

\newtheorem{thm}{Theorem}

\newtheorem{definition}{Definition}
\newtheorem{corollary}{Corollary}

\newtheorem{lemma}[thm]{Lemma}
\newtheorem{remark}{Remark}

\newtheorem{theorem}[thm]{Theorem}
\newtheorem{proposition}[thm]{Proposition}

\title{A sharp threshold of propagation connectivity for mixed random hypergraphs}

\author{Guangyan Zhou$^{1,a}$,\quad Bin Wang$^{b}$,\quad Ke Xu$^{c}$ \\
\footnotesize $^{a}$Department of Mathematics, Beijing Technology and Business University,  China\\
\footnotesize $^{b}$ Key Laboratory of Random Complex Structures and Data Science, Academy of Mathematics and \\
\footnotesize Systems Science, Chinese Academy of Sciences, China\\
\footnotesize $^{c}$ State Key Laboratory of Software Development Environments, Beihang University, China\\
}
\date{}
\begin{document}
\maketitle
\begin{abstract}
This paper studies the propagation connectivity of a random hypergraph $\mathbb{G}$ containing both 2-edges and 3-hyperedges. We find an exact threshold of the propagation connectivity of $\mathbb{G}$: If $I_{\epsilon,r}<-1$, then $\mathbb{G}$ is not propagation connected with high probability; while if $I_{\epsilon,r}>-1$, then $\mathbb{G}$ is propagation connected with high probability, where $I_{\epsilon,r}$ is a constant dependent on the parameters of 2 and 3-edge probabilities.

 {\bf Keywords:}  Propagation connectivity, Hypergraph, Phase transition, Markov additive process.

\end{abstract}
\footnotetext[1]{Corresponding author. Email: zhouguangyan@btbu.edu.cn, wangbin@amss.ac.cn, kexu@nldse.buaa.edu.cn.

 Research supported
by National Natural Science Fund of China (Grant No.
11501017, 11626039, 61702019).}
\section{Introduction }
The study of phase transition phenomena of constraint satisfaction problems is one of the most interesting topics  at the intersection of mathematics, statistical physics and computer science. In mathematics, since the seminal work of Erd\H{o}s and R\'{e}nyi \cite{ER}, identifying the thresholds for different properties has been a major task in the theory of random graphs and hypergraphs. Throughout the years, various types of graphs have been studied by graph theorists, such as planar graphs, routing networks and computational graphs that are used in designing algorithms or simulations, etc. Graph theory has emerged as a primary tool for detecting numerous hidden structures in various information networks, including internet graphs, social networks, or more generally, any graph representing relations in massive data sets.
Nowadays, more important properties of random graph have been found, and the picture of the evolution of the random graph is fairly complete.

Connectivity is perhaps the most basic property of graphs and hypergraphs, and is also a fundamental combinatorial problem. For random graphs, the definition of connectivity  is quite natural. For random graph $\mathbb{G}(n,p)$, which has $n$ vertices and each edge appears with probability $p=\frac cn$  ($c$ is a constant), there exist double phase transitions  near $c=1$ where the size of the largest connected component changes twice - first from $\Theta(\ln n)$ to $\Theta( n^{2/3})$, and then from $\Theta( n^{2/3})$ to $\Theta(n)$ \cite{JLR}.   For random hypergraphs, the definition of connectivity may differs. The `standard' concept of random hypergraph connectivity (where edges are replaced by triangles) has been studied \cite{BCK,BCK2,CMS}. Propagation connectivity is one kind of connectivity that has been studied in recent years, e.g. on general hypergraphs \cite{BO09} and on 3-uniform hypergraphs \cite{DN05,COW}.
Intuitively, propagation connectivity is analogous to the growth of a network: starting from a vertex, or a small initial graph, adding a new node and a new edge at each time following certain growth rules. It can be explained in terms of a simple marking process (or branching process): at each step, if there exists an edge $e$ where all vertices have been marked except one vertex, then we mark this vertex. If there exists a marking process such that all the vertices can be marked, then this graph is propagation connected. Specifically,  Coja-Oghlan et al. \cite{COW} obtained the following results on 3-uniform hypergraphs.

\begin{theorem}\cite{COW}\label{COW}
Suppose that $p=\frac{r}{n\ln n}$ for a constant $r>0$.\\
(1) If $r<0.16$, then $\mathbf{H}(n,p)$ fails to be propagation connected w.h.p. $^1$\\
(2) If $r>0.25$, then $\mathbf{H}(n,p)$ is propagation connected w.h.p.
\end{theorem}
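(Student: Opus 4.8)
The plan is to study the marking process started from a seed of two vertices and to track the size $k=|M|$ of the marked set. The quantity that governs everything is the expected number of vertices that become newly markable when $|M|=k$, namely $\lambda_k:=(n-k)\bigl(1-(1-p)^{\binom{k}{2}}\bigr)\approx rk^{2}/(2\ln n)$. This isolates a critical size $k^{\ast}:=\sqrt{2\ln n/r}$, at which $\lambda_{k^{\ast}}\approx 1$, so that the process is subcritical for $k<k^{\ast}$ and supercritical for $k>k^{\ast}$. I would first remove the supercritical regime by a flooding argument: once $|M|$ reaches a safe size $K$ of order $k^{\ast}\sqrt{\ln k^{\ast}}$ (so that $\lambda_{K}\to\infty$), a Chernoff estimate shows that $|M|$ grows in every round w.h.p.\ until it is linear in $n$, after which each leftover vertex has $\approx\binom{n}{2}$ candidate witnessing pairs and fails to be marked only with probability at most $e^{-\binom{n}{2}p}$; a union bound over the $O(\sqrt{\ln n})$ rounds and the at most $n$ leftover vertices finishes the propagation. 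Everything therefore reduces to the subcritical and near-critical window $2\le k\lesssim K$.

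In that window the process is a barely surviving branching process, and this is where the real difficulty lies. A direct computation shows that a fixed seed reaches the safe size with probability only $e^{-\Theta(k^{\ast})}$: the subcritical product $\prod_{k\le k^{\ast}}\lambda_{k}$ already contributes $e^{-2k^{\ast}+o(k^{\ast})}$, and surviving the near-critical rounds costs a further $e^{-\Theta(k^{\ast})}$. Since there are $\binom{n}{2}$ seeds, the expected number $X$ of seeds that propagate satisfies $\mathbb{E}[X]=n^{2}e^{-\Theta(k^{\ast})}=\exp\bigl(2\ln n-\Theta(\sqrt{\ln n})\bigr)\to\infty$ for every fixed $r>0$. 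Thus the threshold cannot be read off from the first moment at all; it is a concentration phenomenon, controlled entirely by the correlations between seeds that share the expensive early part of their marked set. This is exactly the source of the gap between the two constants.

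For the upper bound ($r>0.25$) I would run a second-moment argument on the number $X$ of seeds that reach the safe size $K$. The leading part of $\mathbb{E}[X^{2}]$ comes from pairs of seeds with disjoint growing clusters, contributing $\approx\mathbb{E}[X]^{2}$, together with pairs that coincide up to some overlap size $m$ and then split apart. Controlling the latter amounts to comparing the cost of building one shared cluster and then completing twice from size $m$ against the cost of two fully independent completions. When $r$ is large ($k^{\ast}$ small) the early structure is cheap to build, so sharing it saves too little for the correlated pairs to overwhelm $\mathbb{E}[X]^{2}$, and Paley--Zygmund then gives $X>0$ w.h.p. Optimizing this comparison over the overlap size $m$ is what yields the explicit constant $0.25$.

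For the lower bound ($r<0.16$) the first moment is actively misleading, because $\mathbb{E}[X]\to\infty$ and yet we must prove $\Pr[X>0]\to 0$. This forces a genuinely different argument: the successful seeds must be shown to be so strongly clustered---conditioning on one surviving trajectory makes a whole bundle of neighbouring trajectories survive---that the event $\{X>0\}$ occupies only a vanishing part of the probability space. I would try to make this precise by controlling the joint growth of all seeds simultaneously and showing that, for $r<0.16$, w.h.p.\ the marking process started from every seed halts before reaching the safe size; the constant $0.16$ should emerge as the rate at which this simultaneous-survival probability decays. This near-critical analysis is the main obstacle of the whole theorem: the second-moment overlap bound and the simultaneous-halting bound each have to be pushed to a sharp exponential rate merely to produce a numerical constant, and since the two rates are estimated by different lossy means they fail to meet---which is exactly why only the gap $(0.16,0.25)$ is obtained, and why closing it demands the finer Markov additive and large-deviation analysis of the growth process.
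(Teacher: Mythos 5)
The decisive error is at the very start of your plan: you take $\lambda_k=(n-k)\bigl(1-(1-p)^{\binom{k}{2}}\bigr)\approx rk^{2}/(2\ln n)$ as the branching ratio of the marking process at marked-set size $k$. That computation implicitly re-exposes all $\binom{k}{2}$ pairs of marked vertices at every round, but the hyperedges are sampled only once. In the correct exploration (the one this paper formalizes as a Markov additive process), when the $t$-th vertex is marked, only the $\approx t$ \emph{fresh} pairs it forms with previously marked vertices can produce newly markable vertices, so the offspring mean at step $t$ is $\approx np_3t=rt/\ln n$; this is exactly the role of $p(t)$ in (\ref{pt}) with $\epsilon=1$. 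Consequently the critical size is $\ln n/r=\Theta(\ln n)$, not $k^{\ast}=\sqrt{2\ln n/r}$; the probability that a seed pair survives to the safe size is a binomial large-deviation quantity,
\begin{equation*}
\exp\Bigl(\ln n\int_{0}^{1/r}\bigl[1-ru+\ln(ru)\bigr]\,du+o(\ln n)\Bigr)=n^{-\frac{1}{2r}+o(1)},
\end{equation*}
which is polynomially small, not $e^{-\Theta(\sqrt{\ln n})}$; and your flooding step fails exactly where you invoke it, because at size $O\bigl(\sqrt{\ln n\,\ln\ln n}\bigr)$ the true offspring mean $rk/\ln n$ is still $o(1)$, so the process there is strongly subcritical and w.h.p. dies shortly after. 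This is why the paper's Lemma \ref{middlesize} has to exclude components of all sizes between $K_0\ln n$ and $n-1$, with the safe size pitched at $\Theta(\ln n)$.

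Everything you build on top inherits this miscalibration. In truth $\mathbb{E}[X]=\binom{n}{2}\,n^{-1/(2r)+o(1)}=n^{2-\frac{1}{2r}+o(1)}$, which tends to $0$ precisely when $r<1/4$ and to $\infty$ when $r>1/4$: the first moment is not ``actively misleading'' but sharp, and it is what pins down the constant $0.25$. Part (1) in its optimal form follows from a union bound over seed pairs once the survival probability is computed at the exact large-deviation rate; that is precisely the route of Proposition \ref{m=1} and Corollary \ref{m2} here (specialized to $\epsilon=1$, where $I_{1,r}=-\tfrac{1}{2r}<-2\iff r<0.25$), and it is how the $0.16$-versus-$0.25$ gap gets closed --- the $0.16$ of \cite{COW} is a lossy version of this same first-moment estimate, not the decay rate of a clustering phenomenon, so your proposed ``simultaneous-halting'' argument for the subcritical side is both unnecessary and, as sketched, not a proof. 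For part (2) your second-moment plan has the right shape (compare Section 4.4: bound the probability that two propagation processes intersect, deduce $\mathbb{E}[X^{2}]=(1+o(1))\mathbb{E}[X]^{2}$, then use the no-intermediate-size lemma to upgrade one surviving seed to full connectivity), but the constant $0.25$ does not emerge from optimizing an overlap parameter; it is simply the first-moment threshold. Note finally that the paper does not re-prove the quoted theorem at all --- it cites \cite{COW} --- and instead proves the sharper Theorem \ref{theorem}, whose $p_2=0$ case supersedes part (1).
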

\footnotetext[1]{We say a sequence of events $\xi_n$ occurs with high probability (w.h.p.) if $\lim_{n\rightarrow\infty}Pr[\xi_n]=1$.}

In this paper, we study the phase transition window of random graph whose edge density stays below the threshold mentioned above (i.e. $c<1$) by adding 3-hyperedges into it. Specifically, in this paper we consider the propagation connectivity of a random hypergraph $\mathbb{G}=\mathbb{G}(n,p_2,p_3)$, which is composed of both 2-edges and 3-hyperedges on $n$ vertices. Each possible 2-edge on exactly two vertices is included in $\mathbb{G}(n,p_2,p_3)$ with probability $p_2$, and each possible 3-hyperedge on exactly three vertices is included in $\mathbb{G}(n,p_2,p_3)$ with probability $p_3$. Throughout this paper, we let
\begin{equation}\label{defp2p3}
p_2=\frac{1-\epsilon}{n},p_3=\frac{r}{n\ln n},
\end{equation}where parameters $0<\epsilon<1,r>0$ are constants.

 Using probabilistic methods, we study the propagation process starting from a random vertex, and present our most valuable and technical contribution in Theorem \ref{theorem}  that there exists a sharp phase transition of propagation connectivity of hypergraph $\mathbb{G}(n,p_2,p_3)$. A special case of Theorem \ref{theorem} closes the gap left by Coja-Oghlan et al. \cite{COW} in Theorem \ref{COW}.

The rest of the paper is organized as follows. In Section 2, we introduce the definition of propagation connectivity in hypergraph $\mathbb{G}(n,p_2,p_3)$. In Section 3, we present our main results on the threshold of propagation connectivity of the random hypergraph $\mathbb{G}(n,p_2,p_3)$. The proof of our main theorem is put in Section 4. In particular, the main technique in this paper lies heavily on Markov chain techniques which give relatively precise estimates of some crucial probabilities. Section 5 draws a conclusion.


\section{Propagation connectivity}

\begin{definition}(Propagation connectivity).
Let $H=(V,E)$ be a hypergraph on $n=|V|$ vertices, and the edge set $E$ contains both edges of length 2 and edges of length 3. If there exits a propagation sequence $e_1,e_2,...,e_{n-1}\in E$, such that for any integer $1\le l\le n-2$ we have $|e_{l+1}\cap\bigcup_{i=1}^le_i|=|e_{l+1}|-1$, then we say $H$ is propagation connected.
\end{definition}

To explain this concept, we show that the propagation sequence of a hypergraph $\mathbb{G}(n,p_2,p_3)$  is in fact  a certain Markov additive process. During the propagation process, vertices will be labeled as active, inactive and unexplored. We denote by $V$  the vertex set of $\mathbb{G}$, $\mathcal{Y}_t$ the set of active vertices at time $t$, and $\mathcal{D}_t$ the set of inactive vertices at time $t$. At the beginning, $\mathcal{Y}_0=\{v_0\}$, $\mathcal{D}_0=\emptyset$. At time $t$ ($t=0,1,2...$), we pick an active vertex $v_t$ and check all unexplored vertices that can be connected to $v_t$. For a random unexplored vertex $u$, if there exists a 2-edge which connects $v_t$ and $u$, or there exists a 3-edge which connects $v_t,u$ and an inactive vertex $\omega\in\mathcal{D}_{t}$, then $u$ becomes active and at the same time, $v_t$ becomes inactive:
\begin{align*}
\mathcal{Y}_{t+1}&=\mathcal{Y}_t\cup\{u:u\text{ becomes active}\}\setminus\{v_t\},\\
\mathcal{D}_{t+1}&=\mathcal{D}_t\cup\{v_t\}.
\end{align*}
Intuitively, each step increases exactly one vertex to $\mathcal{D}_t$ which can be considered as the formal set of vertices propagation connected from $v_0$. Let $Y_t=|\mathcal{Y}_t|$ be the total number of vertices that were ``active" at time $t$. Denote by $Z_t$  the total number of new vertices $u$ that becomes active at step $t$. Once the set $\mathcal{Y}_t$ becomes empty (i.e. $Y_t=0$), the process terminates. We denote $C_{v_0}$ the component  whose vertices are propagation connected from $v_0$.  It is easy to see that the first hitting time $T_{v_0}=\inf\{t:Y_t=0\}$ is exactly the size of $C_{v_0}$.

The process above is in fact a Markov additive process, where the distribution of the increment $Z_t$ is only dependent on $Y_{t}$ and $t$, which means
\begin{align*}
\mathbf{Pr}[Z_t\big|Y_1=y_1,..,Y_t=y_t]=\mathbf{Pr}[Z_t\big|Y_t=y_t].
\end{align*}

It is easy to see that
\begin{equation*}
   \left\{
 \begin{aligned}
 &Y_0=1,\\
 &Y_{t+1}=Y_{t}+Z_t-1.
 \end{aligned}
 \right.
 \end{equation*}
Note that at time $t$, given an active vertex $v_t$, a random unexplored vertex $\omega$ is not connected to $v_t$ through a 2-edge with probability $1-p_2$, and is not connected to $v_t$ through a 3-edge with probability $(1-p_3)^{t}$, since there are $t$ inactive vertices at time $t$. On the other hand, the total number of unexplored vertices is $n-t-Y_{t}$. Thus the conditional distribution of $Z_t$  on $Y_t$ follows a binomial distribution as
\begin{equation}\label{pt}
Z_t|Y_{t}\sim \mathbf{B}[n-t-Y_{t},p(t)],\text{ where }p(t)=1-(1-p_2)(1-p_3)^{t}.
\end{equation}
For simplicity, we define three functions which will be used later.
\begin{align}\label{funlamda}
\nonumber\lambda(x)&=1-\epsilon+rx,\\
\lambda_1(x)&=1-\epsilon+\frac r2x,\\
\nonumber\lambda_2(x)&=\frac{1-\epsilon}{2}+\frac r6x.
\end{align}

\section{Main results}

In the following we present the most important theorem of this paper, a sharp threshold of propagation connectivity of random hypergraph $\mathbb{G}(n,p_2,p_3)$.
\begin{theorem}\label{theorem}
Let $p_2=\frac{1-\epsilon}{n}$, $p_3=\frac{r}{n\ln n}$, where $0<\epsilon<1,r>0$ are constants. The random hypergraph $\mathbb{G}(n,p_2,p_3)$ has the following properties:
\begin{itemize}
  \item If $I_{\epsilon,r}<-1$, then w.h.p. $\mathbb{G}$ is not propagation connected;
  \item If $I_{\epsilon,r}>-1$, then w.h.p. $\mathbb{G}$ is propagation connected,
\end{itemize}
where
\begin{equation}
I_{\epsilon,r}=\int_{0}^{\frac{\epsilon}{r}}\big[1-\lambda(\omega)+\ln \lambda(\omega)\big]d\omega=-\frac{1}{r}\left[\epsilon-\frac{\epsilon^2}{2}+(1-\epsilon)\ln(1-\epsilon)\right].
\end{equation}
\end{theorem}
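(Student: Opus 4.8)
The plan is to reduce propagation connectivity of $\mathbb{G}$ to a single local quantity: the probability $\pi_n$ that the Markov additive exploration of Section 2, started from one vertex $v_0$, survives the initial subcritical phase. Rescaling time by $\omega = t/\ln n$ and using (\ref{pt}) with $N_t = n-t-Y_t \approx n$ and $(1-p_3)^t \approx 1-tp_3$, one checks that $Z_t$ is asymptotically Poisson with mean $np(t) \approx \lambda(\omega) = 1-\epsilon+r\omega$. Hence the increment $Z_t-1$ of the walk $Y_{t+1}=Y_t+Z_t-1$ has drift $\lambda(\omega)-1$, which is negative precisely while $\omega < \epsilon/r$ and positive once $\omega > \epsilon/r$. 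Thus $\epsilon/r$ — the upper limit in the definition of $I_{\epsilon,r}$ — is exactly the critical time at which $\lambda$ crosses $1$: the exploration must first traverse a subcritical barrier of length $T^\ast = (\epsilon/r)\ln n$, after which it is supercritical and, with probability bounded away from $0$, grows to linear size. I would call $v_0$ \emph{good} if its exploration reaches $t=T^\ast$ with $Y_{T^\ast}\ge 1$; note that a good exploration touches only $O(\ln n)$ vertices before $T^\ast$, a fact I will use below.

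The technical core is the estimate $\pi_n = n^{I_{\epsilon,r}+o(1)}$, which I would prove by a sharp exponential-tilting (large-deviation) analysis of survival across the barrier. With $\Lambda_t(\theta)=\ln \mathbf{E}[e^{\theta(Z_t-1)}] \approx -\theta+\lambda(\omega)(e^{\theta}-1)$, the cheapest way for a negatively-drifting walk to stay positive is to tilt each step to zero drift, $e^{\theta_t}=1/\lambda(\omega)$, at per-step cost $-\min_\theta\Lambda_t(\theta)=\lambda(\omega)-1-\ln\lambda(\omega)\ge 0$. Summing over the barrier gives
\begin{equation*}
-\ln\pi_n \;\approx\; \sum_{t=1}^{T^\ast}\bigl[\lambda(t/\ln n)-1-\ln\lambda(t/\ln n)\bigr] \;\approx\; \ln n\int_0^{\epsilon/r}\!\bigl[\lambda(\omega)-1-\ln\lambda(\omega)\bigr]\,d\omega \;=\; -\,I_{\epsilon,r}\ln n,
\end{equation*}
so that $\pi_n=n^{I_{\epsilon,r}+o(1)}$; the substitution $u=\lambda(\omega)$ then produces the closed form stated in the theorem. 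The auxiliary functions $\lambda_1,\lambda_2$ of (\ref{funlamda}) I expect to surface in the accompanying variance and joint-survival estimates needed to make the tilting rigorous.

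Given this estimate, the threshold follows from a first/second moment dichotomy for the number $G$ of good vertices, with $\mathbf{E}[G]=n\pi_n=n^{1+I_{\epsilon,r}+o(1)}$. If $I_{\epsilon,r}<-1$ then $\mathbf{E}[G]\to 0$, so by Markov's inequality w.h.p. no vertex spawns a surviving exploration and $\mathbb{G}$ is not propagation connected. If $I_{\epsilon,r}>-1$ then $\mathbf{E}[G]\to\infty$; since two good explorations each reveal only $O(\ln n)$ vertices and are therefore typically vertex-disjoint, their indicators are asymptotically uncorrelated, and the second moment method yields $G>0$ w.h.p. A surviving exploration then crosses into the supercritical regime and grows to a component of size $n-o(n)$, and any two such components must coincide.

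I expect the main obstacle to lie in two places. First, the \emph{lower} bound in $\pi_n=n^{I_{\epsilon,r}+o(1)}$: one must show the optimal zero-drift trajectory is actually realized, controlling the Poisson approximation and the drift corrections uniformly over the entire $\ln n$-length window, which is where the Markov additive machinery is essential. Second, the \emph{mop-up} in the positive direction: propagation connectivity demands that \emph{every} vertex be absorbed, yet the $2$-edge graph alone is subcritical (mean degree $1-\epsilon<1$), so linearly many vertices are isolated in it and can be captured only through $3$-edges joining them to two already-reached vertices. Showing that the expected number of vertices resisting absorption vanishes exactly when $I_{\epsilon,r}>-1$ requires the same large-deviation rate as above, and reconciling the two computations so that the single constant $-1$ governs both the birth of the giant component and the completion of the cover is the most delicate part of the argument.
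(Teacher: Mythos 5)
Your core architecture matches the paper's: the barrier-crossing estimate $\mathbf{Pr}[\text{survive }(\epsilon/r)\ln n\text{ steps}]=n^{I_{\epsilon,r}+o(1)}$ (the paper's Propositions \ref{m=1} and \ref{progreater}, proved by exponential/Chernoff bounds on block-wise dominating binomial processes via the comparison Lemma \ref{lemmadayu} and a cycle lemma), a union bound over vertices when $I_{\epsilon,r}<-1$, and a second-moment argument on good vertices when $I_{\epsilon,r}>-1$. The genuine gap is in your endgame, i.e.\ the bridge from ``some exploration survives the barrier'' to ``$\mathbb{G}$ is propagation connected''. First, your claim that a good exploration ``crosses into the supercritical regime and grows to a component of size $n-o(n)$'' is not automatic: just past $t=(\epsilon/r)\ln n$ the drift $\lambda(\omega)-1$ is only barely positive, so an exploration that cleared the barrier with small $Y_t$ can still die shortly afterwards with non-negligible conditional probability; making this rigorous requires controlling the conditional law of $Y_{T^\ast}$ given survival (or redefining ``good'' so that the drift is already strongly positive at the stopping time, as the paper does by taking $K_0=\max\{1,\lambda_1^{-1}(10)\}$, which forces $\lambda(K_0)\geq 10$). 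Second, your diagnosis of the mop-up is wrong: completing the cover is \emph{not} governed by the constant $-1$ and needs no large-deviation rate. Once the covered set has size $m=\Theta(n)$, a vertex resists absorption with probability $(1-p_2)^{m}(1-p_3)^{\binom m2}=\exp\big(-\Theta(n/\ln n)\big)$ because of the $\binom m2 p_3$ term, so stragglers are absorbed with overwhelming probability for \emph{all} admissible $\epsilon,r$; the constant $-1$ enters only through the factor $n\cdot n^{I_{\epsilon,r}}$ in the moment computations.

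The paper replaces both of these steps by a single structural first-moment lemma (Lemma \ref{middlesize}): w.h.p.\ there is \emph{no} propagation component of size $z\in[K_0\ln n,\,n-1]$, because such a component would simultaneously have no crossing $2$- or $3$-edges and contain at least $z-1$ internal edges, and the product of these two probabilities is at most $n^{-3}$ uniformly over this range of $z$. This lemma is independent of $I_{\epsilon,r}$; combined with it, ``good'' (surviving $K_0\ln n$ steps) immediately forces the component to be all of $V$, with no conditional-growth or mop-up argument at all. Finally, your second-moment step needs the conditioning care the paper supplies (its case analysis of intersecting versus non-intersecting explorations, events $B$ and $\overline{B}$): the ``typically vertex-disjoint'' heuristic is the right idea, but it must be quantified against the event that the second exploration hits the $O(\ln n)$ vertices revealed by the first.
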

\begin{remark}
It is noteworthy that a special case of $p_2=0$ in Theorem \ref{theorem} corresponds to the 3-uniform hypergraph, and our results not only match the results of Theorem 1 obtained by \cite{COW} but also close the gap of threshold for connectivity left by them.
\end{remark}


\section{Proof of Theorem \ref{theorem}}

Below we explain the main idea behind the proof of our main results.
In Section 4.1, we prepare two lemmas which will be used later in our proof. In Section 4.2, we show that w.h.p. there exists no propagation component of intermediate size between $O(\ln n)$ and $n-1$. In Section 4.3,  we prove that if $I_{\epsilon,r}<-1$, then w.h.p. the largest propagation component size is at most $O(\ln n)$, thus w.h.p. $\mathbb{G}$ can not be propagation connected.
Based on the result that there is no existence of propagation component with intermediate size, we know that once a propagation process survives after $O(\ln n)$ steps, then it is possible that this process will continue to time $n$, which means that the entire hypergraph will be propagation connected by this process. Moreover, in Section 4.3.3, we compare the results of \cite{COW} with a special case of our results, and show that our results actually close the gap left by them.
Finally, in Section 4.4, we prove that if $I_{\epsilon,r}>-1$, then $\mathbb{G}$ is w.h.p. propagation connected.
\subsection{Some lemmas}

We prepare two lemmas which will be used for the proof of our main results later.
\begin{lemma}
Suppose random variables $X_1,X_2,...,X_n$ are independently and identically distributed, denote two events as
\begin{align*}
A&=\{X_1\geq0,X_1+X_2\geq0,...,X_1+X_2+...+X_n\geq 0\},\\
B&=\{X_1+X_2+...+X_n\geq0\}.
\end{align*}
Then
$$\frac{\mathbf{Pr}(B)}n\leq \mathbf{Pr}(A)\leq \mathbf{Pr}(B).$$
\end{lemma}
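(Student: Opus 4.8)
The plan is to handle the two inequalities separately: the upper bound is immediate from a set inclusion, while the lower bound rests on a cyclic-symmetry (cycle-lemma) argument.

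For the upper bound, I would simply observe that $A\subseteq B$: if every partial sum $X_1+\cdots+X_k$ is non-negative for $1\le k\le n$, then in particular the full sum $X_1+\cdots+X_n\ge 0$. Monotonicity of probability then yields $\mathbf{Pr}(A)\le\mathbf{Pr}(B)$.

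For the lower bound, write $S_0=0$ and $S_k=X_1+\cdots+X_k$. For each fixed $j\in\{0,1,\dots,n-1\}$, let $A_j$ be the event that the cyclically shifted sequence $X_{j+1},\dots,X_n,X_1,\dots,X_j$ has all partial sums non-negative (so $A_0=A$). Since $X_1,\dots,X_n$ are i.i.d., hence exchangeable, every cyclic shift has the same joint law, and therefore $\mathbf{Pr}(A_j)=\mathbf{Pr}(A)$ for all $j$. The heart of the argument is the pointwise inclusion $B\subseteq\bigcup_{j=0}^{n-1}A_j$; once this is in hand, a union bound gives $\mathbf{Pr}(B)\le\sum_{j=0}^{n-1}\mathbf{Pr}(A_j)=n\,\mathbf{Pr}(A)$, which is exactly $\mathbf{Pr}(B)/n\le\mathbf{Pr}(A)$. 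To prove the inclusion, fix a realization in $B$ (so $S_n\ge 0$) and let $k^\ast$ be an index at which $\min_{0\le k\le n}S_k$ is attained; I claim the shift by $j=k^\ast$ lies in $A_{k^\ast}$. For a partial sum of the shifted sequence that does not wrap around one has $S_{k^\ast+i}-S_{k^\ast}\ge 0$ by minimality of $S_{k^\ast}$, and for one that wraps around one has $(S_n-S_{k^\ast})+S_i=S_n+(S_i-S_{k^\ast})\ge 0$, using $S_n\ge 0$ together with the same minimality. Thus all partial sums of the shifted sequence are non-negative, establishing $B\subseteq\bigcup_j A_j$.

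The main obstacle is the correct bookkeeping in the cyclic-symmetry step: the minimizing index $k^\ast$ is itself random and depends on the $X_i$, so exchangeability cannot be applied to the \emph{random} shift directly. The proof is therefore organized so that exchangeability is invoked only on the \emph{deterministic} events $A_j$ (to get $\mathbf{Pr}(A_j)=\mathbf{Pr}(A)$), while the random choice of $k^\ast$ enters solely through the pointwise inclusion $B\subseteq\bigcup_j A_j$. Because only the \emph{existence} of one good shift is needed there, and not its uniqueness, possible ties in the running minimum cause no difficulty, and the non-strict inequalities $\ge 0$ are handled without extra care.
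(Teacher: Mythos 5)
Your proof is correct and follows essentially the same route as the paper's: both establish the upper bound via the inclusion $A\subseteq B$, and both prove the lower bound by the classical cycle-lemma argument — comparing $A$ with its cyclic shifts $A_j$, using the i.i.d.\ (exchangeability) assumption to get $\mathbf{Pr}(A_j)=\mathbf{Pr}(A)$, and showing pointwise that any realization in $B$ lies in the shift $A_{k^\ast}$ determined by a minimizing index of the partial sums, whence $\mathbf{Pr}(B)\le n\,\mathbf{Pr}(A)$ by a union bound. Your explicit remark that exchangeability is applied only to the deterministic events $A_j$, with the random index $k^\ast$ entering only through the inclusion $B\subseteq\bigcup_j A_j$, is a nice clarification of a point the paper leaves implicit.
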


\begin{proof}
The second inequality on the right hand side holds trivially. So we will only prove the inequality on the left side.

For any $k\in N$, let the subscript of $X_k$ module $n$, i.e. $X_{n+k}=X_k$, so that all the subscripts are at most $n$. Define $S_k=X_1+...+X_k$. Consider the following events:
$$A_k=\{X_k\geq0,X_k+X_{k+1}\geq0,...,X_k+X_{k+1}+...+X_{k+n-1}\geq 0\},k=1,...,n.$$
By symmetry, it is easy to see that
$$\mathbf{Pr}(A)=\mathbf{Pr}(A_1)=...=\mathbf{Pr}(A_n).$$
Suppose the event $B$ happens, i.e. $S_n\geq0$, and suppose $S_t$ ($0\leq t\leq n-1$) is the minimum one of the set $\{S_0\equiv0,S_1,...,S_{n-1}\}$. Then for all $k=1,2,...,n$, if $t+k\leq n-1$ we have $S_{t+k}\geq S_t$; if $t+k\geq n$, we have $S_{t+k}=S_{t+k-n}+S_n\geq S_t$.  Therefore it always holds that
$$X_{t+1}+X_{t+2}+...+X_{t+k}=S_{t+k}-S_t\geq0,$$
 which means that the event $A_{t+1}$ happens, thus
 $$\mathbf{Pr}(B)\leq\mathbf{Pr}(A_1)+\mathbf{Pr}(A_2)+...+\mathbf{Pr}(A_n)=n\mathbf{Pr}(A).$$
\end{proof}

\begin{definition}
Let the distribution function of a random variable $X$ be $F(x)=\mathbf{Pr}[X\leq x]$, and denote $\overline{F}(x)=\mathbf{Pr}[X> x]=1-F(x)$. We say that $F\succ G$, if two distribution functions $F$ and $G$ satisfying $F(t)\leq G(t)$ (or $\overline{F}(t)>\overline{G}(t)$) for all $t\in R$.
\end{definition}

 Take the binomial distribution for example, if $n_1\geq n_2$ then  $\mathbf{B}(n_1,p_1)\succ\mathbf{B}(n_2,p_1)$; if $p_1\geq p_2$, then $\mathbf{B}(n_1,p_1)\succ\mathbf{B}(n_1,p_2)$.

With this notation, we consider two Markov additive processes, and have the following results which will be important to our argument.

\begin{lemma}\label{lemmadayu}
Suppose two stochastic processes $\{S_k\}$ and $\{T_k\}$ satisfy that
$$S_{k+1}=S_{k}+X_k-q,T_{k+1}=T_{k}+Y_k-q, $$
where $S_0=T_0=b\in Z$, $q$ is a nonnegative integer, $X_k|S_k\sim F_{k,S_k}$ and $Y_k|T_k\sim G_{k,T_k}$. If the distribution functions $F_{k,S_k}$ and $G_{k,T_k}$ take values of non-negative integers, and satisfying
\begin{enumerate}
  \item For any $k\in N,z\in Z,l\in R$, it holds that $G_{k,z}(l)\leq G_{k,z-1}(l+1)$;
  \item There exists an integer $M\geq b$ such that for any $z\leq M-kq$, it holds that $F_{k,z}\succ G_{k,z}.$
\end{enumerate}
Then for any sequence $\{l_k|l_k\leq M-kq\}$,
$$\mathbf{Pr}[S_1>l_1,S_2>l_2,...,S_n>l_n]\geq \mathbf{Pr}[T_1>l_1,T_2>l_2,...,T_n>l_n].$$
\end{lemma}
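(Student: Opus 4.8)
The plan is to prove the inequality by building an explicit monotone coupling of the two processes on a common probability space and showing that, under this coupling, the event $\{T_k > l_k \text{ for all } k\}$ is contained in $\{S_k > l_k \text{ for all } k\}$; the probability comparison then follows immediately. I would drive both chains by the same sequence of independent uniforms $U_0, U_1, \dots, U_{n-1}$ on $[0,1]$, setting $S_{k+1} = S_k - q + F_{k,S_k}^{-1}(U_k)$ and $T_{k+1} = T_k - q + G_{k,T_k}^{-1}(U_k)$, where $H^{-1}(u) = \inf\{x : H(x) \ge u\}$ is the generalized inverse. This reproduces the correct marginal laws and reduces everything to pathwise inequalities between quantile functions.

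I would first recast the two hypotheses in quantile form. Condition 2, $F_{k,z} \succ G_{k,z}$, is equivalent to $F_{k,z}^{-1}(u) \ge G_{k,z}^{-1}(u)$ for all $u$, but only for $z \le M - kq$. Condition 1, $G_{k,z}(l) \le G_{k,z-1}(l+1)$, I would translate into the shift inequality $G_{k,z-1}^{-1}(u) \le G_{k,z}^{-1}(u) + 1$ (evaluate condition 1 at $l = G_{k,z}^{-1}(u)$); iterating this from $z = S_k$ down to $T_k$, which is legitimate because all states are integers, yields $G_{k,T_k}^{-1}(u) \le (S_k - T_k) + G_{k,S_k}^{-1}(u)$ whenever $S_k \ge T_k$. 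Combining the two, a one-line computation gives $S_{k+1} - T_{k+1} \ge 0$ provided $S_k \ge T_k$ and $S_k \le M - kq$.

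The main obstacle is exactly the region restriction $z \le M - kq$ attached to condition 2: the domination step is only available while the larger chain $S$ stays below the moving barrier $M - kq$, and a priori $S$ could cross above it. I would resolve this by carrying the weaker invariant $I_k : (S_k \ge T_k) \lor (S_k > M - kq)$ through the induction. If $S_k \le M - kq$, then $I_k$ forces $S_k \ge T_k$ and the quantile computation above gives $S_{k+1} \ge T_{k+1}$, so the first disjunct survives. If instead $S_k > M - kq$, then because the increments $F_{k,S_k}^{-1}(U_k)$ are non-negative integers while the barrier drops by exactly $q$ per step, $S_{k+1} \ge (M - kq + 1) - q = M - (k+1)q + 1 > M - (k+1)q$, so the second disjunct survives and in fact persists at every later step. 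Hence $I_k$ holds for every $k$, the base case being $S_0 = T_0 = b$.

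Finally, since each $l_k \le M - kq$, the invariant closes the argument: on $\{T_k > l_k \ \forall k\}$, at each step either $S_k \ge T_k > l_k$ or $S_k > M - kq \ge l_k$, so $S_k > l_k$ for all $k$. Thus the $T$-event is contained in the $S$-event under the coupling, and $\mathbf{Pr}[S_1 > l_1, \dots, S_n > l_n] \ge \mathbf{Pr}[T_1 > l_1, \dots, T_n > l_n]$ follows. The only ingredients requiring care beyond this are the standard facts that $F \succ G$ implies $F^{-1} \ge G^{-1}$ and that the generalized inverse reproduces the marginal distribution; both are routine.
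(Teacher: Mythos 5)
Your proof is correct, but it proceeds by a genuinely different route than the paper. The paper proves the lemma by induction on the time horizon $n$: it decomposes $\mathbf{Pr}[S_1>l_1,\dots,S_{n+1}>l_{n+1}]$ according to the value $S_n=z$, uses condition 2 to replace $\overline{F}_{n,z}$ by $\overline{G}_{n,z}$ for the relevant range $l_n<z\le l_{n+1}+q$ (values $z>l_{n+1}+q$ being automatic since increments are non-negative), and then performs an Abel-type summation by parts together with condition 1 so that the induction hypothesis $\mathbf{Pr}[\widetilde A_{n,z}]\ge\mathbf{Pr}[\widetilde B_{n,z}]$ can be applied term by term. You instead build an explicit monotone coupling via inverse-transform sampling with shared uniforms, recast condition 1 as the quantile shift inequality $G_{k,z-1}^{-1}(u)\le G_{k,z}^{-1}(u)+1$ (equivalently, stochastic monotonicity of the $T$-kernel) and condition 2 as quantile dominance below the barrier, and then carry the pathwise invariant ``$S_k\ge T_k$ or $S_k>M-kq$'' through the induction; the key observation that once $S$ crosses the moving barrier it stays above it forever (because increments are $\ge 0$ and the barrier drops by $q$ per step) plays exactly the role that the automatic range $z>l_{n+1}+q$ plays in the paper's computation, and since $l_k\le M-kq$ the invariant yields the event inclusion and hence the probability inequality. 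Both arguments exploit the same two structural facts in the same places, but your coupling version is shorter, pathwise, and conceptually cleaner --- it avoids the delicate index bookkeeping of the summation by parts (where the paper's display in fact contains minor typos) and actually proves the stronger statement that the two processes can be realized on one space so that the $T$-event implies the $S$-event; the paper's approach, in exchange, stays entirely at the level of distribution functions and never requires constructing a joint law.
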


\begin{proof}
We prove this claim by induction on $n$.
For simplicity, denote
\begin{align*}
A_n=\{S_1>l_1,S_2>l_2,...,S_n>l_n\};\\
B_n=\{T_1>l_1,T_2>l_2,...,T_n>l_n\}.
\end{align*}

If $n=1$, and $X_0\thicksim F_{0,b}, Y_0\thicksim G_{0,b}, F_{0,b}\succ G_{0,b}$, then the claim holds since
\begin{align*}
\mathbf{Pr} [A_1]&=\mathbf{Pr}[S_1\geq l_1]=\mathbf{Pr}[X_0>l_1+q-b]=\overline{F_{0,b}}(l_1+q-b)\\
&\geq\overline{G_{0,b}}(l_1+q-b)=\mathbf{Pr} [B_1].
\end{align*}

Suppose the results hold for $n$, then we consider the case of $n+1$.

If $l_{n+1}\leq l_n-q$, then $S_n>l_n$ implies that $S_{n+1}=S_n+X_n-q\geq l_n-q\geq l_{n+1}$, and the result holds obviously:
$$\mathbf{Pr}[A_{n+1}]=\mathbf{Pr}[A_n]\geq\mathbf{Pr}[B_n]=\mathbf{Pr}[B_{n+1}].$$

Now we consider $l_{n+1}> l_n-q$, denote
\begin{align*}
A_{n,l}&=\{S_1>l_1,...,S_{n-1}>l_{n-1},S_n=l\};\\
\widetilde{A}_{n,l}&=\{S_1>l_1,...,S_{n-1}>l_{n-1},S_n>l\};\\
B_{n,l}&=\{T_1>l_1,...,T_{n-1}>l_{n-1},S_n=l\};\\
\widetilde{B}_{n,l}&=\{T_1>l_1,...,T_{n-1}>l_{n-1},T_n>l\}.
\end{align*}
In fact, $\widetilde{A}_{n,l}$ and $\widetilde{B}_{n,l}$ can be written as:
\begin{align*}
\widetilde{A}_{n,l}=\bigcup_{i=l+1}^{\infty}A_{n,i},
\widetilde{B}_{n,l}=\bigcup_{i=l+1}^{\infty}B_{n,i},
\end{align*}

The estimate of $\mathbf{Pr}[A_{n+1}]$ can be derived as follows.
\begin{align*}
&\mathbf{Pr}[A_{n+1}]=\mathbf{Pr}[S_1>l_1,...,S_{n}>l_{n},S_{n+1}>l_{n+1}]\\
=&\sum_{z>l_n}\mathbf{Pr}[A_{n,z}]\mathbf{Pr}[X_n|_{S_n=z}>l_{n+1}-z+q]\\
=&\sum_{l_n<z\leq l_{n+1}+q}\mathbf{Pr}[A_{n,z}]\overline{F}_{n,z}(l_{n+1}-z+q)+\sum_{z>l_{n+1}+L}\mathbf{Pr}[A_{n,z}]\\
=&\sum_{l_n<z\leq l_{n+1}+q}\mathbf{Pr}[A_{n,z}]\overline{F}_{n,z}(l_{n+1}-z+q)+\mathbf{Pr}[\widetilde{A}_{n,l_{n+1}+q}]\\
\geq&\sum_{l_n<z\leq l_{n+1}+q}\mathbf{Pr}[A_{n,z}]\overline{G}_{n,z}(l_{n+1}-z+q)+\mathbf{Pr}[\widetilde{A}_{n,l_{n+1}+q}]\\
=&\sum_{l_n<z\leq l_{n+1}+q}\Big(\mathbf{Pr}[\widetilde{A}_{n,z-1}]-\mathbf{Pr}[\widetilde{A}_{n,z}]\Big)
\overline{G}_{n,z}(l_{n+1}-z+q)+\mathbf{Pr}[\widetilde{A}_{n,l_{n+1}+q}]\\
=&\sum_{l_n<z\leq l_{n+1}+q+1}\mathbf{Pr}[\widetilde{A}_{n,z-1}]\overline{G}_{n,z}(l_{n+1}-z+q)-\sum_{l_n<z\leq l_{n+1}+q}\mathbf{Pr}[\widetilde{A}_{n,z}]\overline{G}_{n,z}(l_{n+1}-z+q)\\
=&\sum_{l_n\leq z\leq l_{n+1}+q}\mathbf{Pr}[\widetilde{A}_{n,z}]\overline{G}_{n,z+1}(l_{n+1}-z-1+q)-\sum_{l_n<z\leq l_{n+1}+q}\mathbf{Pr}[\widetilde{A}_{n,z}]\overline{G}_{n,z}(l_{n+1}-z+q)\\
=&\mathbf{Pr}[\widetilde{A}_{n,l_n}]\overline{G}_{n,l_n+1}(l_{n+1}-l_n-1+q)+\\
&\sum_{l_n<z\leq l_{n+1}+q}\mathbf{Pr}[\widetilde{A}_{n,z}]\Big[\overline{G}_{n,z+1}(l_{n+1}-z-1+q)-\overline{G}_{n,z}(l_{n+1}-z+q)\Big].
\end{align*}
The inequality above is because for $l_n<z\leq l_{n+1}+q$, the fact $l_{n+1}\leq M-(n+1)q$ implies that $z\leq M-nq$. Applying condition 2 gives $\overline{F}_{n,z}(l)\geq\overline{G}_{n,z}(l)$.

Similarly, we can estimate $\mathbf{Pr}[B_{n+1}]$.
\begin{align*}
&\mathbf{Pr}[B_{n+1}]=\mathbf{Pr}[T_1>l_1,...,T_{n}>l_{n},T_{n+1}>l_{n+1}]\\
&=\sum_{z>l_n}\mathbf{Pr}[B_{n,z}]\mathbf{Pr}[Y_n|_{l_n=z}>l_{n+1}-z+q]\\
&=\sum_{l_n<z\leq l_{n+1}+q}\mathbf{Pr}[B_{n,z}]\overline{G}_{n,z}(l_{n+1}-z+q)+\sum_{z>l_{n+1}+q}\mathbf{Pr}[B_{n,z}]\\
&=\sum_{l_n<z\leq l_{n+1}+q}\Big(\mathbf{Pr}[\widetilde{B}_{n,z-1}]-\mathbf{Pr}[\widetilde{B}_{n,z}]\Big)
\overline{G}_{n,z}(l_{n+1}-z+q)+\mathbf{Pr}[\widetilde{B}_{n,l_{n+1}+q}]\\
&=\sum_{l_n<z\leq l_{n+1}+q+1}\mathbf{Pr}[\widetilde{B}_{n,z-1}]\overline{G}_{n,z}(l_{n+1}-z+q)-\sum_{l_n<z\leq l_{n+1}+q}\mathbf{Pr}[\widetilde{B}_{n,z}]\overline{G}_{n,z}(l_{n+1}-z+q)\\
&=\sum_{l_n\leq z\leq l_{n+1}+q}\mathbf{Pr}[\widetilde{B}_{n,z}]\overline{G}_{n,z+1}(l_{n+1}-z-1+q)-\sum_{l_n<z\leq l_{n+1}+q}\mathbf{Pr}[\widetilde{B}_{n,z}]\overline{G}_{n,z}(l_{n+1}-z+q)\\
&=\mathbf{Pr}[\widetilde{B}_{n,l_n}]\overline{G}_{n,l_n+1}(l_{n+1}-l_n-1+q)+\\
&\sum_{l_n<z\leq l_{n+1}+q}\mathbf{Pr}[\widetilde{B}_{n,z}]\Big[\overline{G}_{n,z+1}(l_{n+1}-z-1+q)-\overline{G}_{n,z}(l_{n+1}-z+q)\Big].
\end{align*}

Noticing that $\overline{G}_{n,l_n+1}(l_{n+1}-l_n-1+q)\geq0$, and combining condition 1 we know that $\overline{G}_{n,l_n+1}(l_{n+1}-z-1+q)\geq\overline{G}_{n,l_n}(l_{n+1}-z+q)$. If $z=l_n$ or $l_n<z\leq l_{n+1}+q$, then in fact $z\leq M-nq$. By the induction hypothesis we know that $\mathbf{Pr}[\widetilde{A}_{n,z}]\geq\mathbf{Pr}[\widetilde{B}_{n,z}]$, therefore
$$\mathbf{Pr}[A_{n+1}]\geq\mathbf{Pr}[B_{n+1}].$$

This completes our proof.

\end{proof}

\subsection{No components of intermediate size}
\begin{lemma}\label{middlesize}
There exists a constant $K_0=\max\{1,\lambda_1^{-1}(10)\}$, such that with probability approaching one as $n\rightarrow\infty$, there is no propagation connected components of size between $K_0\ln n$ and $n-1$.
\end{lemma}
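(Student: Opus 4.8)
The plan is to run a first–moment (union bound) argument over the $n$ possible starting vertices. By the exchangeability of the vertices it suffices to show that the probability that the process launched from a fixed $v_0$ terminates at some intermediate time is $o(1/n)$, i.e.
\begin{equation*}
\mathbf{Pr}\big[K_0\ln n\le T_{v_0}\le n-1\big]=o(1/n),
\end{equation*}
for then the expected number of vertices whose component has intermediate size is $o(1)$ and Markov's inequality yields that w.h.p.\ no such vertex exists. I would write $\mathbf{Pr}[K_0\ln n\le T_{v_0}\le n-1]\le\sum_{s=K_0\ln n}^{n-1}\mathbf{Pr}[T_{v_0}=s]$ and bound each term.

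The first observation is that dying at time $s$ forces an atypically small total increment. Since exactly one vertex leaves the active set at each step, the number of explored vertices $t+Y_t$ is non-decreasing and equals $s$ at termination; hence on $\{T_{v_0}=s\}$ we have $t+Y_t\le s$ for all $t\le s$, so $n-t-Y_t\ge n-s$ throughout. Dropping the positivity constraints and using $Y_s=Y_0+\sum_{t=0}^{s-1}(Z_t-1)=0\iff\sum_{t=0}^{s-1}Z_t=s-1$ gives the crude but decisive bound $\mathbf{Pr}[T_{v_0}=s]\le\mathbf{Pr}\big[\sum_{t=0}^{s-1}Z_t=s-1\big]$.

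Next I compare $\{Y_t\}$ with a random walk of strictly positive drift. For $s\le n/2$ the bound $t+Y_t\le s$ gives $n-t-Y_t\ge n/2$, so for every $t\ge K_0\ln n$ the increment satisfies $Z_t\mid Y_t\sim\mathbf{B}[n-t-Y_t,p(t)]\succ\mathbf{B}[\lceil n/2\rceil,p(K_0\ln n)]$, whose mean, by the expansion $p(t)\sim p_2+p_3t$ and the choice $K_0=\lambda_1^{-1}(10)$, is bounded below by an absolute constant $c_0>1$ of order $\lambda_1(K_0)=10$ (drift $\ge c_0-1$). Using the stochastic domination of Lemma~\ref{lemmadayu} to replace the dependent increments $\{Z_t\}_{t\ge K_0\ln n}$ by i.i.d.\ copies $\{\widetilde Z_t\}$ of this binomial and discarding the nonnegative contribution of the first $K_0\ln n$ steps, the event $\{\sum_{t=0}^{s-1}Z_t=s-1\}$ is contained in $\{\sum_t\widetilde Z_t\le s-1\}$, a sum of $s-K_0\ln n$ i.i.d.\ variables of mean $c_0$. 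A Chernoff bound for the lower tail then yields $\mathbf{Pr}[T_{v_0}=s]\le\exp(-c\,s)$; the point of taking the drift as large as $10$ (equivalently, of defining $K_0$ through $\lambda_1^{-1}(10)$) is precisely to force the exponent to exceed $\ln n$, so that each term is $o(1/n^{2})$ and the sum over $K_0\ln n\le s\le n/2$ and over the $n$ vertices remains $o(1)$.

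It remains to exclude termination at a size $s>n/2$, where the mean-increment bound degrades as the graph gets exhausted. Here I would argue that such a late death is essentially impossible: on $\{T_{v_0}=s\}$ with $s>n/2$ the walk must survive with drift $\ge c_0-1$ for all $t\in[K_0\ln n,n/2]$ while $t+Y_t\le n/2$, and the same domination forces $Y_t$ to exceed $\ln^{2}n$ within $O(\ln^{2}n)$ steps with probability $1-o(n^{-2})$; once $Y_t\ge\ln^{2}n$, the ruin probability of the dominating positive-drift walk, taken over all $\le n$ remaining steps, is $n\exp(-\Omega(\ln^{2}n))=o(n^{-2})$. (If instead $t+Y_t$ ever reaches $n$, the increments are forced to $0$ and $Y_t$ descends by one per step to $0$ exactly at $t=n$, giving size $n$ rather than an intermediate size.) Thus the contribution of $s>n/2$ is also $o(1/n)$. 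The main obstacle I anticipate is this second comparison step: converting the pathwise lower bound on the conditional mean of $Z_t$ into a genuine tail bound requires the domination machinery of Lemma~\ref{lemmadayu} to absorb the dependence of $Z_t$ on $Y_t$, while simultaneously controlling the near-exhaustion regime so that the drift lower bound $\lambda_1(K_0)=10$ stays valid; arranging the Chernoff exponent to beat the $\ln n$ coming from the $1/n$ target—the reason the threshold is set at the generous level $10$ rather than just above $1$—is the delicate quantitative point.
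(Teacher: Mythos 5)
Your dynamic, drift-based route has a genuine gap, and it sits exactly where the lemma is most delicate: component sizes $s$ just above $K_0\ln n$. After you lower-bound the first $K_0\ln n$ increments by $0$, your comparison event is that a sum of $s-K_0\ln n$ i.i.d.\ variables of mean $c_0$ is at most $s-1$. The Chernoff exponent for this event is proportional to $s-K_0\ln n$, not to $s$; worse, whenever $s-1\ge c_0\,(s-K_0\ln n)$, i.e.\ for all $s\le\frac{c_0}{c_0-1}K_0\ln n$, the threshold lies at or above the mean of the dominated sum and the bound is vacuous (for $s=K_0\ln n+1$ it is a probability close to $1$). So the claimed estimate $\mathbf{Pr}[T_{v_0}=s]\le e^{-cs}$, and with it the $o(n^{-2})$ per-term bound, fails on a window of width $\Theta(\ln n)$ just above $K_0\ln n$. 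This window cannot be recovered from drift alone: conditioned on arriving at time $K_0\ln n$ with $Y_{K_0\ln n}=1$, death at the very next step has \emph{constant} probability $\approx e^{-\lambda(K_0)}$, so any bound of order $o(n^{-2})$ must quantify how unlikely it is to reach time $K_0\ln n$ with a small active set --- precisely the information you discard; your first reduction (replacing the first-passage event by $\sum_{t<s}Z_t=s-1$) is also too lossy there, since it lets the formally continued walk go negative and return. The paper handles this regime (its Case 3, $K_0\ln n\le z\le(\ln n)^3$) by an entirely different, static idea: a propagation-connected component on $z$ vertices must be isolated \emph{and} must contain at least $z-1$ edges, and the generating-function bound $\ln\mathbf{Pr}[E_v\ge z-1]\le z-(z-1)\big(\ln n-\ln z-\ln\lambda_2(K)\big)$ supplies a factor roughly $n^{-(z-1)}$ that cancels the entropy $\binom nz\le(ne/z)^z$; only then does the isolation factor $e^{-0.9z\lambda_1(K)}$, through the choice $\lambda_1(K_0)=10$, finish the job. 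Your proposal never uses the edge-count constraint, and without it the constant $10$ cannot play the role you assign to it.

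A second gap is the regime $s>n/2$: there $t+Y_t\le s$ only gives $n-t-Y_t\ge n-s$, which may be as small as $1$, so the drift lower bound is unavailable for the later steps; the drift in fact turns negative as soon as $(n-t-Y_t)\,p(t)<1$, which already happens when $n-t-Y_t=O(\ln n)$, and the walk can genuinely die there, yielding a component of size $s\le n-1$. Your parenthetical remark covers only the extreme case $t+Y_t=n$. The paper avoids any process analysis for all $z\ge(\ln n)^3$: the isolation probability alone, $\exp\big[-p_2z(n-z)-p_3\binom z2(n-z)\big]$ with dominant exponent $\frac{rz^2(n-z)}{2n\ln n}$, beats the entropy term, giving $n^{-3}$ per size. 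A workable repair is to keep your union-bound frame over vertices and sizes but import the paper's static estimates: isolation plus edge count for $K_0\ln n\le z\le(\ln n)^3$, and isolation alone for larger $z$.
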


\begin{proof}
Suppose the propagation component connected from vertex $v$ is $C_v$, which contains $|C_v|$ vertices. There are two facts for $C_v$: first, there is no 2-edge connecting one vertex from $C_v$ and one vertex from $V\setminus C_v$, and there is no 3-edge connecting two vertices from $C_v$ and one vertex from $V\setminus C_v$; second, since $C_v$ is propagation connected, then $C_v$ contains at least $|C_v|-1$ edges.

Let $E_v$ denote the number of edges in $C_v$.
The probability that the size of the component $C_v$ is $z$ can be upper bounded as follows.
\begin{align}\label{pro1}
\mathbf{Pr}[|C_v|=z]\leq\binom{n}{z}(1-p_2)^{z(n-z)}(1-p_3)^{\binom z2(n-z)}\mathbf{Pr}[E_v\geq z-1].
\end{align}
The stirling's formula $n!=\sqrt{2\pi n}\left(\frac ne\right)^ne^{\theta_n}$ $\left(\frac{1}{12n+1}<\theta_n<\frac{1}{12n}\right)$ implies that for all $1\leq z\leq n$,
\begin{equation}\label{nchoosez}
\binom{n}{z}\leq\frac{n^n}{z^z(n-z)^{n-z}}=\frac{n^z}{z^z}\left(1+\frac{z}{n-z}\right)^{n-z}\leq\left(\frac{ne}{z}\right)^z.
\end{equation}
Also it is straightforward that
\begin{equation}\label{p2p3}
(1-p_2)^{z(n-z)}(1-p_3)^{\binom z2(n-z)}\leq\exp\left\{-p_2z(n-z)-p_3\binom z2(n-z)\right\}.
\end{equation}
We will estimate  (\ref{pro1}) by three cases.\\

\textbf{Case 1: } $\frac{n}{2}\leq|C_v|=z\leq n-1$. We have
\begin{equation*}
\ln\binom{n}{z}=\ln\binom{n}{n-z}\leq\ln\left(\frac{ne}{n-z}\right)^{n-z}\leq(n-z)(\ln n+1).
\end{equation*}
Taking the logarithm on both sides of (\ref{pro1}) and notice that $\mathbf{Pr}[E_v\geq z-1]\leq1$, we get
\begin{align*}
\ln\mathbf{Pr}[|C_v|=z]&\leq(n-z)(\ln n+1)-(n-z)\left(\frac{1-\epsilon}{n}z+\frac{r}{n\ln n}\frac{z^2}{2}\right)\\
&=-(n-z)\frac{rz^2}{2n\ln n}(1+o(1))\\
&\leq-3\ln n,
\end{align*}
which yields
\begin{equation*}
\mathbf{Pr}[|C_v|=z]\leq n^{-3}.
\end{equation*}
\\
\textbf{Case 2: } $(\ln n)^3\leq|C_v|=z=\frac{n}{2}$. Similar to case 1, we can apply (\ref{nchoosez}) and (\ref{p2p3}) to get

\begin{align*}
\ln\mathbf{Pr}\big[|C_v|=z\big]&\leq z(\ln n-\ln z+1)-z\left((1-\epsilon)\frac{n-z}{n}+\frac{r}{2}\frac{z(n-z)}{n\ln n}\right)\\
&\leq\ln n-z\frac{r}{2}\frac{(\ln n)^3(n-n/2)}{n\ln n}\leq -z\ln n\\
&\leq-3\ln n,
\end{align*}
which yields
\begin{equation*}
\mathbf{Pr}[|C_v|=z]\leq n^{-3}.
\end{equation*}
\\

\textbf{Case 3: } $K_0\ln n\leq|C_v|=z\leq(\ln n)^3$. We rewrite $z$ as $z=K\ln n $, where $K$ is dependent on $n$. We claim that the probability of the component size $|C_v|$ being $z$ is also upper bounded by $n^{-3}$.

To begin with, suppose $C_v$ contains $E_2$ 2-edges and $E_3$ 3-edges, then  $E_2$ and $E_3$ independently follow the binomial distribution with $E_2\thicksim \mathbf{B}\left[\binom{z}{2},p_2\right]$ and $E_3\thicksim \mathbf{B}\left[\binom{z}{3},p_3\right]$. Let $E_v=E_2+E_3$ be the total number of edges in $C_v$. Obviously, For all $s\geq1$, the generating function of $E_v$ satisfies
\begin{equation}
E[s^{E_v}]\leq\big(1+p_2(s-1)\big)^{z^2/2}\big(1+p_3(s-1)\big)^{z^3/6},
\end{equation}
thus it holds for all $s\ge1$ that
\begin{equation}
\mathbf{Pr}[E_v\geq z-1]\leq\frac{E[s^{E_v}]}{s^{z-1}}\leq\frac{1}{s^{z-1}}\big(1+p_2(s-1)\big)^{z^2/2}\big(1+p_3(s-1)\big)^{z^3/6}.
\end{equation}
Now we can upper bound the probability that $C_v$ contains at least $z-1$ edges as follows.
\begin{align*}
\ln\mathbf{Pr}[E_v\geq z-1]
&\leq\frac{z^2}{2}\ln\big(1+p_2(s-1)\big)+\frac{z^3}{6}\big(1+p_3(s-1)\big)-(z-1)\ln s\\
&\leq \frac{z^2}{2}p_2(s-1)+\frac{z^3}{6}p_3(s-1)-(z-1)\ln s\\
&=(s-1)\frac{z^2}{n}\lambda_2(K)-(z-1)\ln s.
\end{align*}
Substituting $s=s_0\equiv\frac{n}{z\lambda_2(K)}\geq1$ into the above inequality, we obtain
\begin{align}\label{edge3}
\ln\mathbf{Pr}[E_v\geq z-1]\leq z-(z-1)\ln s_0\leq z-(z-1)(\ln n-\ln z-\ln \lambda_2(K)).
\end{align}

At the same time, notice that
\begin{align*}
&\ln\binom{n}{z}\leq z(\ln n-\ln z)+z,\\
&(1-p_2)^{z(n-z)}(1-p_3)^{\binom z2(n-z)}\leq\exp\left[-z\frac{n-z}n\lambda_1(K)\right]\leq-0.9z \lambda_1(K).
\end{align*}

Putting these two inequalities and (\ref{edge3}) together, we get
\begin{align*}
\ln\mathbf{Pr}[|C_v|=z]&\leq2z+\ln n-\ln z+(z-1)\ln \lambda_2(K)-0.9z\lambda_1(K)\\
&\leq z\big(2+\ln \lambda_2(K)-0.9\lambda_1(K)\big)+\ln n.
\end{align*}
Note that we require $K_0=\max\{1,\lambda_1^{-1}(10)\}$, thus for all $z=K\ln n\geq K_0\ln n$, it holds that $\lambda_1(K)\geq10$ and $\ln \lambda_2(K)-0.9\lambda_1(K)\leq-6$. Now for any $K_0\ln n\leq z\leq(\ln n)^3$ we have
\begin{align*}
\ln\mathbf{Pr}[|C_v|=z]\leq-4z+\ln n\leq-3\ln n,
\end{align*}
which yields
\begin{equation*}
\mathbf{Pr}[|C_v|=z]\leq n^{-3}.
\end{equation*}

As a consequence, putting the three cases together, we conclude that
\begin{align*}
\sum_{z=K_0\ln n}^{n-1}\mathbf{Pr}[|C_v|=z]\leq n\cdot n^{-3}=n^{-2},
\end{align*}
which completes our proof.
\end{proof}


\subsection{Subcritical regime: $I_{\epsilon,r}<-1$}

\subsubsection{Proof of the lower bound in Theorem \ref{theorem}}
To prove that  $\mathbb{G}$ is not propagation connected,  it suffices to show that almost all propagation processes terminate before $O(\ln n)$ steps. Recall that $K_0=\max\{1,\lambda_1^{-1}(10)\}$.

\begin{proposition}\label{m=1}
Let $v_0$ be any given vertex, then
\begin{equation}
\mathbf{Pr}[|C_{v_0}|\geq K_0\ln n]\leq n^{I_{\epsilon,r}+o(1)},
\end{equation}
where $C_{v_0}$ is the propagation component generated by $v_0$.
\end{proposition}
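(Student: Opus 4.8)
The plan is to analyze the Markov additive process $\{Y_t\}$ introduced in Section 2 and bound the survival probability $\mathbf{Pr}[T_{v_0}\geq K_0\ln n]$, where $T_{v_0}=\inf\{t:Y_t=0\}$ equals $|C_{v_0}|$. Recall from \eqref{pt} that $Z_t\,|\,Y_t\sim\mathbf{B}[n-t-Y_t,\,p(t)]$ with $p(t)=1-(1-p_2)(1-p_3)^t$, and $Y_{t+1}=Y_t+Z_t-1$. The event $\{|C_{v_0}|\geq K_0\ln n\}$ is exactly the event that the process survives (stays positive) for at least $K_0\ln n$ steps, i.e. $Y_t>0$ for all $t<K_0\ln n$. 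My first step is therefore to rewrite this survival probability in the form $\mathbf{Pr}[S_1>0,S_2>0,\dots,S_{K_0\ln n}>0]$ for a suitably shifted partial-sum process, so that the increment-comparison machinery of Lemma \ref{lemmadayu} and the cyclic-symmetry bound of Lemma 1 both become applicable.

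The second step is to construct a dominating comparison process $\{T_k\}$ whose increments are i.i.d. and stochastically dominate those of $\{Y_t\}$ in the regime $t\leq K_0\ln n$. The key observation is that for $t=O(\ln n)$ we have $n-t-Y_t=n(1+o(1))$ and, since $p_3=\frac{r}{n\ln n}$, the quantity $(1-p_3)^t$ behaves like $\exp(-p_3 t)=\exp(-\frac{rt}{n\ln n})$; writing $t=\omega n$ (or tracking the natural continuous rescaling that makes $\int_0^{\epsilon/r}[\cdots]d\omega$ appear) one finds that the per-step mean increment is governed by $\lambda(\omega)=1-\epsilon+r\omega$. Thus I expect $p(t)\cdot(n-t-Y_t)\approx\lambda(t/n\cdot\text{something})$, and the increments $Z_t$ are approximately Poisson with this mean. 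I would use the stochastic-ordering definition ($F\succ G$) together with Lemma \ref{lemmadayu} to replace the true binomial increments by simpler dominating ones, verifying conditions 1 and 2 of that lemma with $q=1$ and an appropriate cutoff $M$.

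The third step is the large-deviations estimate. By Lemma 1, the survival probability $\mathbf{Pr}[A]$ over $N=K_0\ln n$ steps is comparable, up to the polynomial factor $\tfrac1N$, to the single unconstrained event $\mathbf{Pr}[B]=\mathbf{Pr}[S_N\geq 0]$, i.e. the probability that the total drift stays nonnegative. Since the factor $\frac{1}{N}=\frac{1}{K_0\ln n}$ only contributes an $n^{o(1)}$ correction, the exponential rate is entirely determined by $\mathbf{Pr}[S_N\geq 0]$. Computing the Cram\'er/Chernoff rate for the sum of the (approximately Poisson with mean $\lambda(\omega)$) increments, and passing from the discrete sum $\sum_t$ to the integral $\int_0^{\epsilon/r}[1-\lambda(\omega)+\ln\lambda(\omega)]\,d\omega$, should produce exactly the exponent $I_{\epsilon,r}$, so that $\mathbf{Pr}[S_N\geq 0]=n^{I_{\epsilon,r}+o(1)}$. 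The rate function $\Lambda^*$ of a Poisson$(\lambda)$ variable evaluated where the empirical mean equals $1$ (the critical drift forcing $S_N\geq0$) is precisely $1-\lambda+\lambda\ln\lambda$, and after the logarithmic change of variables the $\ln\lambda$ piece rescales to $\ln\lambda(\omega)$, which is why the integrand takes its stated form.

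The main obstacle, I expect, is the matching upper bound on the rate: one must confirm that the endpoint $\omega=\epsilon/r$ (equivalently $t$ up to the scale where $\lambda(\omega)$ crosses the critical value) is the correct and optimal truncation of the drift, and that the continuous integral $I_{\epsilon,r}$ genuinely captures the discrete optimization over all survival paths rather than merely bounding it. Concretely, the delicate points are (i) justifying the Riemann-sum convergence $\frac1{\text{scale}}\sum_t[\cdots]\to I_{\epsilon,r}$ uniformly enough to control the $o(1)$ in the exponent, and (ii) verifying that the dependence of the binomial parameter on the current state $Y_t$ (through $n-t-Y_t$) does not alter the rate — this is exactly where Lemma \ref{lemmadayu} must be invoked to dominate the state-dependent process by a state-independent one without loss in the exponent. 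Once these are in hand, combining the cyclic-symmetry reduction of Lemma 1 with the Chernoff estimate yields $\mathbf{Pr}[|C_{v_0}|\geq K_0\ln n]\leq n^{I_{\epsilon,r}+o(1)}$, completing the proof.
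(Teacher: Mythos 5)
Your framework (Markov additive process, state-independent domination via Lemma \ref{lemmadayu}, Chernoff bounds) matches the paper's, but the reduction in your third step is where the argument breaks. You propose to bound the survival probability by the single unconstrained event $\{S_N\geq 0\}$ and to claim, via the cyclic-shift lemma (your ``Lemma 1''), that the two agree up to a factor $1/N=n^{o(1)}$. That lemma requires i.i.d.\ increments, while the increments here are time-inhomogeneous: the mean of $Z_t$ grows from $\lambda(0)=1-\epsilon$ to $\lambda(\overline{\omega})=1$ across the window $[0,\overline{\omega}\ln n]$, $\overline{\omega}=\epsilon/r$ (note also the time scaling is $t=\omega\ln n$, not $t=\omega n$). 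This puts you in a dilemma. If you insist on a genuinely i.i.d.\ dominating process, its increments must dominate the worst (latest) ones, hence have mean at least $\lambda(\overline{\omega})=1$; then $\mathbf{Pr}[S_N\geq 0]=\Theta(1)$ and the bound is vacuous. If instead you keep the time-varying means, only the trivial direction $\mathbf{Pr}[\mathrm{survival}]\leq\mathbf{Pr}[S_N\geq 0]$ is available, and this bound is genuinely too weak: by Chernoff/Cram\'er the exponent of $\mathbf{Pr}[S_N\geq 0]$ is
\[
\overline{\omega}\bigl[1-\bar\lambda+\ln\bar\lambda\bigr],\qquad
\bar\lambda=\frac{1}{\overline{\omega}}\int_0^{\overline{\omega}}\lambda(\omega)\,d\omega=1-\frac{\epsilon}{2},
\]
whereas $I_{\epsilon,r}=\overline{\omega}(1-\bar\lambda)+\int_0^{\overline{\omega}}\ln\lambda(\omega)\,d\omega$; by strict concavity of $\ln$ (Jensen), $\int_0^{\overline{\omega}}\ln\lambda(\omega)\,d\omega<\overline{\omega}\ln\bar\lambda$, so $I_{\epsilon,r}$ is \emph{strictly smaller} than the final-sum exponent (for $\epsilon=1/2$, $r=1$: $I_{\epsilon,r}\approx-0.028$ versus $\approx-0.019$). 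Probabilistically, $\{S_N\geq0\}$ is dominated by paths that dip far below zero during the early subcritical phase ($\lambda<1$) and recover near the end, where the process is almost critical; survival forbids exactly those paths, and that prohibition is what lowers the exponent. So your plan can at best prove $n^{J+o(1)}$ with $J>I_{\epsilon,r}$, which proves neither the proposition nor the sharp threshold.

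The missing idea --- and the heart of the paper's proof --- is to retain the positivity constraint at a mesoscopic scale rather than only at the final time. The paper splits $[0,\overline{\omega}\ln n]$ into $L$ blocks $\Delta_k=[\omega_{k-1}\ln n,\omega_k\ln n]$, dominates the increments on each block by $\mathbf{B}[n,\lambda(\omega_k)/n]$ (Lemma \ref{lemmadayu}, exactly as you intended), conditions on the block-endpoint values $Y_{t_k}=y_k$, and derives a blockwise Chernoff exponent $M_k(d_k)$ in the normalized slopes $d_k=(y_k-y_{k-1})/(\overline{\omega}\ln n/L)$. Survival forces $y_k\geq1$, i.e.\ the partial-sum constraints $d_1+\cdots+d_k\geq 0$ for \emph{every} $k$, and the constrained optimization of $\sum_k M_k(d_k)$ --- via an Abel summation (using $\ln\lambda(\omega_k)\leq0$ and the nonnegativity of the partial sums) and convexity of $(1+x)\ln(1+x)-x$ --- shows the optimum is the Darboux sum of $\int_0^{\overline{\omega}}[1-\lambda(\omega)+\ln\lambda(\omega)]\,d\omega$, attained by the path hugging $d_k=\lambda(\omega_k)-1$. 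Finally, Proposition \ref{sizesmall} caps each $y_k$ by $K_1(c)\ln n$, so that summing over all endpoint sequences costs only $(K_1(c)\ln n)^L=n^{o(1)}$. These intermediate constraints, which your reduction to $\{S_N\geq0\}$ discards, are precisely what brings the exponent down from $\overline{\omega}[1-\bar\lambda+\ln\bar\lambda]$ to $I_{\epsilon,r}$.
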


From Proposition \ref{m=1}, if we let $I_{\epsilon,r}<-1$, then
\begin{equation*}
\mathbf{Pr}[\exists\text{ a vertex }v,\text{ s.t. } |C_{v}|\geq K_0\ln n]\leq n\cdot n^{I_{\epsilon,r}+o(1)}=o(1),
\end{equation*}
which proves that w.h.p. $\mathbb{G}$ is not propagation connected.  This completes the proof of the lower bound of Theorem 1.

As a preliminary step to prove Proposition \ref{m=1}, we need the following result that starting from any vertex $v$, the propagation process at step $T$ (where $T\leq K_0\ln n$) has the following property.
\begin{proposition}\label{sizesmall}
If $T\leq K_0\ln n$, then for any constant $c>0$, there exists a constant $K_1(c)$ so that any propagation process has the following property
\begin{align*}
\mathbf{Pr}[|\mathcal{Y}_T|+|\mathcal{D}_T|>K_1(c)\ln n]<n^{-c}.
\end{align*}
\end{proposition}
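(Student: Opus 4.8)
The plan is to reduce the claim to a single large-deviation estimate on the total number of vertices the process has ever touched by time $T$. The starting observation is an exact bookkeeping identity: each step converts one active vertex into an inactive one and creates $Z_t$ new active vertices, and the process begins from the single vertex $v_0$, so the total number of explored vertices satisfies
\begin{equation*}
|\mathcal{Y}_T|+|\mathcal{D}_T| = 1 + \sum_{t=0}^{T-1} Z_t,
\end{equation*}
with the convention $Z_t=0$ once the process dies (both sides then freeze). Hence it suffices to show that $W:=\sum_{t=0}^{T-1}Z_t$ exceeds $(K_1(c)-1)\ln n$ with probability at most $n^{-c}$.

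The second step is to control the per-step edge probability uniformly over the relevant range $t\le T-1\le K_0\ln n$. Using $(1-p_3)^t\ge 1-tp_3$ and $t\le K_0\ln n$, a short computation yields
\begin{equation*}
p(t)=1-(1-p_2)(1-p_3)^{t}\le \frac{(1-\epsilon)+rK_0}{n}=\frac{\lambda(K_0)}{n}=:p^\ast .
\end{equation*}
Since $Z_t\mid Y_t\sim\mathbf{B}[\,n-t-Y_t,\,p(t)\,]$ by (\ref{pt}) and $n-t-Y_t\le n$ together with $p(t)\le p^\ast$, the conditional law of each increment is stochastically dominated by the \emph{fixed} binomial $\mathbf{B}[n,p^\ast]$, uniformly in the past. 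This uniform domination is the key point that lets me sidestep the dependence of $Z_t$ on $Y_t$.

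The third step is a moment-generating-function bound obtained by peeling off one factor at a time. For $s\ge 1$, conditioning on the history $\mathcal{F}_t$ up to time $t$ and applying the uniform domination above,
\begin{equation*}
\mathbf{E}\!\left[s^{Z_t}\mid\mathcal{F}_t\right]=\bigl(1+p(t)(s-1)\bigr)^{\,n-t-Y_t}\le \bigl(1+p^\ast(s-1)\bigr)^{n}\le e^{\,np^\ast(s-1)} .
\end{equation*}
Iterating the tower property over the at most $K_0\ln n$ steps gives $\mathbf{E}[s^{W}]\le e^{\mu(s-1)}$ with $\mu=Tnp^\ast\le K_0\lambda(K_0)\ln n$. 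A Chernoff bound $\mathbf{Pr}[W\ge a]\le e^{\mu(s-1)}s^{-a}$ with $a=(K_1-1)\ln n$, optimized at $s=a/\mu$, produces an exponent of order $\ln n$ whose coefficient is $(K_1-1)-K_0\lambda(K_0)-(K_1-1)\ln\frac{K_1-1}{K_0\lambda(K_0)}$. Because this coefficient tends to $-\infty$ as $K_1\to\infty$, choosing $K_1=K_1(c)$ large enough forces it below $-c$, which is exactly the claimed bound $n^{-c}$.

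The only genuinely delicate point — and the step I expect to require the most care — is the dependence between $Z_t$ and $Y_t$: treating $W$ as a sum of independent binomials would be invalid. The resolution is precisely the uniform conditional domination of $Z_t$ by the fixed binomial $\mathbf{B}[n,p^\ast]$, which legitimizes the successive conditioning in the moment-generating-function estimate (this is the same stochastic-domination mechanism underlying Lemma \ref{lemmadayu}, here applied in its simplest one-sided form). Once that is in place, everything reduces to routine Chernoff bookkeeping and the elementary estimate of $p(t)$.
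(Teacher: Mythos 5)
Your proposal is correct, and it reaches the estimate by a more elementary, self-contained route than the paper. The paper first rewrites the event $\{|\mathcal{Y}_T|+|\mathcal{D}_T|>K_1(c)\ln n\}$ as a path event $\{Y_1>0,\dots,Y_{T-1}>0,\ Y_T>K_1(c)\ln n-T\}$ for the Markov additive process, then invokes the stochastic-domination Lemma \ref{lemmadayu} to compare this path probability with that of an auxiliary process $Y^{(1)}$ whose increments are $\mathbf{B}[n,p(t)]$ independently of the state, drops the survival constraints, and finally bounds the tail of the resulting sum of independent binomials via its generating function with the fixed choice $s=e$. You bypass both of the first two steps: the exact bookkeeping identity $|\mathcal{Y}_T|+|\mathcal{D}_T|=1+\sum_{t<T}Z_t$ makes any comparison of path events unnecessary, and the domination is then carried out at the level of conditional moment generating functions, $\mathbf{E}[s^{Z_t}\mid\mathcal{F}_t]\le e^{np^\ast(s-1)}$ with $p^\ast=\lambda(K_0)/n$, iterated by the tower property. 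This is precisely the one-sided content of Lemma \ref{lemmadayu} that an upper-tail bound actually requires, and since the paper discards the constraints $Y_t>0$ immediately after applying the lemma, nothing is lost by never introducing them; your argument is therefore logically lighter (no need to verify the lemma's hypotheses) while proving the same bound. What the paper's heavier template buys is reuse: Lemma \ref{lemmadayu} is needed in earnest in Propositions \ref{m=1} and \ref{progreater}, where endpoint-constrained path events must be compared, so the authors simply apply the same machinery here as well. The remaining differences are cosmetic: the paper keeps the time-dependent $p(t)$ and bounds $\sum_t p(t)$ (slightly sharper, with $rK_0^2/2$ in place of your $rK_0$ term), whereas you use the uniform bound $p^\ast$; the paper fixes $s=e$ while you optimize $s=a/\mu$ (where one should note, as you implicitly do, that $s\ge1$ once $K_1$ is large). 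Either way the exponent tends to $-\infty$ as $K_1\to\infty$, giving $n^{-c}$ for a suitable constant $K_1(c)$.
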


\begin{proof}
Suppose we start from a vertex $v$, which has the following Markov additive process:
\begin{equation*}
   \left\{
 \begin{aligned}
 &Y_0=1,\\
 &Y_{t+1}=Y_{t}+Z_t-1,
 \end{aligned}
 \right.
 \end{equation*}
 where the conditional distribution of $Z_t$ on $Y_t$ is $$Z_t|Y_t\sim \mathbf{B}[n-t-Y_t,p(t)],\quad p(t)=1-(1-p_2)^(1-p_3)^{t}.$$
We introduce another Markov additive process which corresponds to it:
\begin{equation*}
   \left\{
 \begin{aligned}
 &Y_0^{(1)}=1,\\
 &Y_{t+1}^{(1)}=Y_{t}^{(1)}+Z_t^{(1)}-1,
 \end{aligned}
 \right.
 \end{equation*}
 where $$Z_t^{(1)}|Y_t^{(1)}\sim \mathbf{B}[n,p(t)].$$
Apparently, it holds that $\mathbf{B}[n,p(t)]\succ \mathbf{B}[n-t-Y_t,p(t)]$. Take a sequence $l_1=...=l_{T-1}=0,l_T=K_1(c)\ln n-T$, then by Lemma \ref{lemmadayu} we have
\begin{align*}
\mathbf{Pr}[|\mathcal{Y}_T|+|\mathcal{D}_T|>K_1(c)\ln n]&=\mathbf{Pr}[Y_1>0,...,Y_{T-1}>0,Y_T>K_1(c)\ln n-T]\\
&\leq\mathbf{Pr}[Y_1^{(1)}>0,...,Y_{T-1}^{(1)}>0,Y_T^{(1)}>K_1(c)\ln n-T]\\
&\leq\mathbf{Pr}[Y_T^{(1)}>K_1(c)\ln n-T]\\
&=\mathbf{Pr}[Z^{(1)}\equiv Z_1^{(1)}+Z_2^{(1)}+...+Z_{T}^{(1)}>K_1(c)\ln n].
\end{align*}

The generating function of $Z^{(1)}$ is
\begin{align*}
\mathbf{E}[s^{Z^{(1)}}]&=\prod_{t=1}^T\big(1+p(t)(s-1)\big)^n\leq\exp\big[n(s-1)\sum_{t=1}^T p(t)\big]\\
&=\exp\Big[(s-1)\big((1-\epsilon)K_0+cK_0^2/2\big)\ln n\Big]=n^{(s-1)\big((1-\epsilon)K_0+cK_0^2/2\big)}.
\end{align*}
Now we have
\begin{align*}
\mathbf{Pr}[Z^{(1)}>K_1(c)\ln n]\leq\frac{\mathbf{E}[s^{Z^{(1)}}]}{s^{K_1(c)}}\leq\frac{n^{(s-1)((1-\epsilon)K_0+cK_0^2/2)}}{K_1(c)}.
\end{align*}
Take $s=e$, and let $K_1(c)=(e-1)\big((1-\epsilon)K_0+cK_0^2/2\big)$ yields the desired result that
\begin{align*}
\mathbf{Pr}\big[|\mathcal{Y}_T|+|\mathcal{D}_T|>K_1(c)\ln n\big]=\mathbf{Pr}[Z^{(1)}>K_1(c)\ln n]\leq n^{-c}.
\end{align*}

\end{proof}
\subsubsection{Proof of Proposition \ref{m=1}}

Note that $K_0=\max\{1,\lambda_1^{-1}(10)\}\geq\frac{\epsilon}{r}$, so if suffices to prove
$$\mathbf{Pr}[|C_{v_0}|\geq \overline{\omega}\ln n]\leq n^{I_{\epsilon,r}+o(1)},\text{ where }\overline{\omega}=\frac{\epsilon}{r}.$$

Since
$$\mathbf{Pr}[|C_{v_0}|\geq\overline{\omega}\ln n]=\mathbf{Pr}[Y_t>0,t=0,1,...,\overline{\omega}\ln n],$$
we only need to estimate the probability that $Y_t>0$ holds for all $t\in[0,\overline{\omega}\ln n]$ if $Y_0 = 1$ .

\textbf{Step 1. }Take a large positive integer $L$, we divide the interval $[0,\overline{\omega}\ln n]$ into $L$ intervals: $\Delta_k=[t_{k-1},t_k]\equiv[\omega_{k-1}\ln n,\omega_k\ln n]$, where $\omega_k=\frac kL\overline{\omega}$, $k=1,2,...,L$. We now estimate the following probability on the interval $\Delta_k$.
\begin{align*}
\mathbf{Pr}[\Delta_k]\equiv\mathbf{Pr}\big[\forall t\in[t_{k-1},t_k-1], Y_t>0,Y_{t_k}=y_k|Y_{t_{k-1}}=y_{k-1}\big],
\end{align*}
where $\{y_1,...,y_k\}$ is a random sequence taking nonnegative integers, and $y_0 = 1$ for beginning.

Consider the propagation process on the interval $t\in\Delta_k=[t_{k-1},t_k]$:
\begin{equation*}
   \left\{
 \begin{aligned}
 &Y_{t_{k-1}}=y_{k-1},\\
 &Y_{t+1}=Y_{t}+Z_t-1,
 \end{aligned}
 \right.
 \end{equation*}
 where the conditional distribution of $Z_t$ is
 $$Z_t|Y_t\sim \mathbf{B}[n-t-Y_t,p(t)].$$
We introduce another process which corresponds to it:
\begin{equation*}
   \left\{
 \begin{aligned}
 &Y_{t_{k-1}}^{(2)}=y_{k-1},\\
 &Y_{t+1}^{(2)}=Y_{t}^{(2)}+Z_t^{(2)}-1,
 \end{aligned}
 \right.
 \end{equation*}
 where $$Z_t^{(2)}|Y_t^{(2)}\sim B\left[n,\frac{\lambda(\omega_k)}{n}\right].$$
Since $p(t)\leq\lambda(\omega_k)/n$, thus
$$\mathbf{B}[n-t-Y_t,p(t)]\prec \mathbf{B}\left[n,\frac{\lambda(\omega_k)}{n}\right].$$
Taking $l_{t_{k-1}+1}=...=l_{t_{k}-1}=0,l_{t_{k}}=y_k-1$, and applying lemma \ref{lemmadayu}, we have
\begin{align}\label{prdelta1}
\nonumber\mathbf{Pr}[\Delta_k]&=\mathbf{Pr}\big[\forall t\in[t_{k-1},t_k-1], Y_t>0,Y_{t_k}=y_k|Y_{t_{k-1}}=y_{k-1}\big]\\
\nonumber&\leq\mathbf{Pr}\big[\forall t\in[t_{k-1},t_k-1], Y_t>0,Y_{t_k}\geq y_k|Y_{t_{k-1}}=y_{k-1}\big]\\
\nonumber&\leq\mathbf{Pr}\big[\forall t\in[t_{k-1},t_k-1], Y_t^{(2)}>0,Y_{t_k}^{(2)}\geq y_k|Y_{t_{k-1}}^{(2)}=y_{k-1}\big]\\
&\leq\mathbf{Pr}\big[Y_{t_k}^{(2)}\geq y_k|Y_{t_{k-1}}^{(2)}=y_{k-1}\big].
\end{align}
Denote
$$Z^{(2)}\equiv\sum_{t\in[t_{k-1},t_{k}-1]}Z_t^{(2)}\thicksim B\Big[\frac{\overline{\omega}}{L}n\ln n,\frac{\lambda(\omega_k)}{n}\Big].$$
 The generating function of $Z^{(2)}$ is
\begin{align*}
\mathbf{E}[s^{Z^{(2)}}]=\left(1+\frac{\lambda(\omega_k)}{n}  ( s-1) \right)^{\frac{\overline{\omega}}{L}n\ln n}.
\end{align*}
Denote $$d_k=\frac{y_k-y_{k-1}  }{\overline{\omega}\ln n/L},s_k=\frac{1+d_k}{\lambda(\omega_k)},~~~~ \text{where}~~  k =1,2,\cdots,L .$$

If $d_k\geq \lambda(\omega_k)-1$, then $s_k\geq1$. Substituting $s=s_k$ into the generating function, then we can rewrite the inequality (\ref{prdelta1}) as
\begin{align*}
\ln\mathbf{Pr}[\Delta_k]&\leq\ln\mathbf{Pr}\big[Y_{t_k}^{(2)}\geq y_k|Y_{t_{k-1}}^{(2)}=y_{k-1}\big]
=\ln\mathbf{Pr}\big[Z^{(2)}\geq y_k-y_{k-1}+\frac{\overline{\omega}}{L}\ln n\big]\\
&\leq\ln\frac{\mathbf{E}[s_k^{Z^{(2)}}]}{s_k^{y_k-y_{k-1}+\frac{\overline{\omega}}{L}\ln n}}
\leq(s_k-1)\lambda(\omega_k)\frac{\overline{\omega}}{L}\ln n-\left(y_k-y_{k-1}+\frac{\overline{\omega}}{L}\ln n\right)\ln s_k\\
&=\big(1+d_k-\lambda(\omega_k)\big)\frac{\overline{\omega}}{L}\ln n-(1+d_k)\big[\ln(1+d_k)-\ln \lambda(\omega_k)\big]\frac{\overline{\omega}}{L}\ln n.
\end{align*}

By the additive relations of sequence $\{Y_{t_k}=y_k  \ge 1  \}$, we know that $\{d_k\}$ satisfies
$$d_1\geq0,d_1+d_2\geq0,...,d_1+...+d_L\geq0.$$
For simplicity, define a function of $d_k$:
\begin{equation*}
  M_k(d_k)= \left\{
 \begin{aligned}
 &1+d_k-\lambda(\omega_k)-(1+d_k)[\ln(1+d_k)-\ln \lambda(\omega_k)],\text{ \quad if }d_k\geq \lambda(\omega_k)-1,\\
 &0,\text{ \quad\quad\quad\quad\quad\quad\quad \ \ \quad\quad\quad\quad\quad\quad\quad\quad\quad\quad\quad\quad\quad  if }d_k< \lambda(\omega_k)-1.\\
 \end{aligned}
 \right.
 \end{equation*}
Obviously, $ M_k(d_k)$ decreases with $d_k$. Now we have
\begin{align*}
\ln\mathbf{Pr}[\Delta_k]\leq M_k(d_k)\frac{\overline{\omega}}{L}\ln n.
\end{align*}

\textbf{Step 2. } Now we estimate the probability that $Y_t>0$ holds for any $t=0,..., \overline{\omega}\ln n$ and $Y_t$ takes given values on our interval endpoints.
\begin{align*}
&H(y_1,...,y_L)\\
\equiv&\textbf{Pr}\big[Y_t>0, t=0,1,...,\overline{\omega}\ln n; Y_{t_k}=y_k,k=1,...,L\big]\\
=&1_{y_1>0,...,y_L>0}\prod_{1\leq k\leq L}\mathbf{Pr}\big[\forall t\in[t_{k-1},t_k-1], Y_t>0,Y_{t_k}=y_k|Y_{t_{k-1}}=y_{k-1}\big]\\
=&1_{y_1>0,...,y_L>0}\prod_{1\leq k\leq L}\mathbf{Pr}[\Delta_k].
\end{align*}
Thus
$$\ln H(y_1,...,y_L)\leq\frac{\overline{\omega}}{L}\ln n\sum_{1\leq k\leq L} M_k(d_k).$$
Now we make a minor modification of the sequence $\{d_1,...,d_L\}$. Denote $d_k'=\max\{\lambda(\omega_k)-1,d_k\}$, then $M_k(d_k')=M_k(d_k)$ and $d_k'\geq d_k$, also it holds that
$$d_1'\geq0,d_1'+d_2'\geq0,...,d_1'+...+d_L'\geq0.$$

With this in mind, we can rewrite and decompose the sum of $M_k(d_k)$ as
\begin{align*}
&\sum_{1\leq k\leq L} M_k(d_k)= \sum_{1\leq k\leq L} M_k(d_k')\\
=&\sum_{1\leq k\leq L}
 \Big[1+d_k'-\lambda(\omega_k)-(1+d_k')\big[\ln(1+d_k')-\ln \lambda(\omega_k)\big]\Big]\\
=&\sum_{1\leq k\leq L}\big[1-\lambda(\omega_k)+\ln \lambda(\omega_k)\big]+\sum_{1\leq k\leq L}d_k'\ln \lambda(\omega_k)-\sum_{1\leq k\leq L}\big[(1+d_k')\ln(1+d_k')-d_k'\big].
\end{align*}
Since $\ln \lambda(\omega_k)\leq0$ is a decreasing sequence, we rewrite the second sum above by the Abel transformation, and obtain
\begin{align*}
\sum_{1\leq k\leq L}d_k'\ln \lambda(\omega_k)&=(d_1'+...+d_L')\ln \lambda(\omega_L)+\sum_{1\leq k\leq L-1}(d_1'+...+d_k')[\ln \lambda(\omega_k)-\ln \lambda(\omega_{k+1})]\\
&\leq0.
\end{align*}
On the other hand, $g(x)=(1+x)\ln(1+x)-x$ is a concave function with respect to $x$, thus
\begin{align*}
\sum_{1\leq k\leq L}\big[(1+d_k')\ln(1+d_k')-d_k'\big]=\sum_{1\leq k\leq L}g(d_k')\geq L\cdot g\left(\frac{d_1'+...+d_L'}{L}\right)\geq0.
\end{align*}
As a consequence,
\begin{align*}
\sum_{1\leq k\leq L} M_k(d_k)\leq\sum_{1\leq k\leq L}\big[1-\lambda(\omega_k)+\ln \lambda(\omega_k)\big].
\end{align*}

Consider $I_{\epsilon,r}=\int_{0}^{\overline{\omega}}\big[1-\lambda(\omega)+\ln \lambda(\omega)\big]d\omega$. Under the partition $\Delta_k,k=1,...,L$, the upper Darboux sum of $I_{\epsilon,r}$ is
$$\frac{\overline{\omega}}{L}\sum_{1\leq k\leq L}\big[1-\lambda(\omega)+\ln \lambda(\omega)\big].$$
Note that the integrand above is increasing with $\omega$, thus
\begin{align*}
&\frac{\overline{\omega}}{L}\sum_{1\leq k\leq L}\big[1-\lambda(\omega)+\ln \lambda(\omega)\big]-\int_{0}^{\overline{\omega}}\big[1-\lambda(\omega)+\ln \lambda(\omega)\big]d\omega\\
\leq&\Big[\big(1-\lambda(\overline{\omega})+\ln \lambda(\overline{\omega})\big)-\big(1-\lambda(0)+\ln \lambda(0)\big)\Big]\cdot\max(\omega_k-\omega_{k-1})\\
=&-\big(\epsilon+\ln(1-\epsilon)\big)\frac{\overline{\omega}}{L}.
\end{align*}

We are now ready to estimate $H(y_1,...,y_L).$
\begin{align*}
\ln H(y_1,...,y_L)&\leq \frac{\overline{\omega}}{L}\ln n\sum_{1\leq k\leq L}M_k(d_k)\\
&\leq\frac{\overline{\omega}}{L}\ln n\sum_{1\leq k\leq L}\big[1-\lambda(\omega)+\ln \lambda(\omega)\big]\\
&\leq I_{\epsilon,r}\ln n-\big(\epsilon+\ln(1-\epsilon)\big)\overline{\omega}\ln n/L \\
&=(I_{\epsilon,r}+C/L)\ln n ,
\end{align*}
where $C=-(\epsilon+\ln(1-\epsilon))\overline{\omega}$ is a constant.

As a consequence,
\begin{equation*}
H(y_1,...,y_L)\leq n^{I_{\epsilon,r}+C/L  }.
\end{equation*}

\textbf{Step 3. } Finally, returning to the estimate of $\mathbf{Pr}[|C_{v_0}|\geq\overline{\omega}\ln n]$.
\begin{align*}
\mathbf{Pr}&[|C_{v_0}|\geq\overline{\omega}\ln n]=\mathbf{Pr}[Y_t>0,\forall t=0,1,..,\overline{\omega}\ln n]\\
&=\sum_{y_1>0,...,y_L>0}\textbf{Pr}[Y_t>0, t=0,1,...,\overline{\omega}\ln n; Y_{t_k}=y_k,k=1,...,L]\\
&=\sum_{y_1>0,...,y_L>0}H(y_1,...,y_L).
\end{align*}

From Proposition \ref{sizesmall} and take $c=2-I_{\epsilon,r}$, then
\begin{align*}
&\sum_{y_1>0,...,y_j>K_1(c)\ln n,...,y_L>0}H(y_1,...,y_L)\\
\leq&\sum_{y_1>0,...,y_j>K_1(c)\ln n,...,y_L>0}\mathbf{Pr}[Y_{t_k}=y_k,k=1,...,L]=\mathbf{Pr}[Y_{t_j}>K_1(c)\ln n]\\
\leq& n^{-c}=n^{I_{\epsilon,r}-2}.
\end{align*}

Consequently,
\begin{align*}
&\mathbf{Pr}[|C_{v_0}|\geq\overline{\omega}\ln n]\\
=&\sum_{y_1\leq K_1(c)\ln n,...,y_L\leq K_1(c)\ln n}H(y_1,...,y_L)+\sum_{j=1}^{L}\sum_{y_1>0,...,y_j>K_1(c)\ln n,...,y_L>0}H(y_1,...,y_L)\\
\leq& (K_1(c)\ln n)^Ln^{I_{\epsilon,r}}n^{C/L}+Ln^{I_{\epsilon,r}-2}\\
\leq &n^{I_{\epsilon,r}+\varepsilon},
\end{align*}
where the last inequality holds for any sufficiently small constant $\varepsilon=\varepsilon(L)>0$.

\subsubsection{Special case of $p_2=0$}
In our previous analysis, all the propagation processes start from exactly one random variable. If we consider a special case of $p_2=0$, then the hypergraph $\mathbb{G}=\mathbb{G}(n,p_2,p_3)$ is exactly the 3-uniform hypergraph. In this case, we have to start the propagation process with two vertices instead of just one. Similarly we denote by $C_{\{v_1,v_2\}}$ the component generated by starting from $v_1,v_2$, where the propagation process stays the same except for the initial set $\mathcal{Y}_0$:
\begin{equation*}
   \left\{
 \begin{aligned}
 &Y_0=2,\\
 &Y_{t+1}=Y_{t}+Z_t-1.
 \end{aligned}
 \right.
 \end{equation*}
Similar to Proposition \ref{m=1}, propagation processes which start from any two arbitrary vertices will terminate w.h.p. before $O(\ln n)$ steps.
\begin{corollary}\label{m2}
Let $v_1,v_2$ be any random vertices, then
\begin{equation*}
\mathbf{Pr}[|C_{v_1,v_2}|\geq K_0\ln n]\leq n^{I_{\epsilon,r}+o(1)}.
\end{equation*}
\end{corollary}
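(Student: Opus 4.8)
The plan is to rerun, essentially verbatim, the three-step argument of Proposition~\ref{m=1}, changing only the initial condition from $Y_0=1$ to $Y_0=2$ and setting $\epsilon=1$ (so that $\lambda(\omega)=r\omega$ and $\overline{\omega}=1/r$). Since $K_0\geq\overline{\omega}$, it suffices to bound $\mathbf{Pr}[|C_{v_1,v_2}|\geq\overline{\omega}\ln n]=\mathbf{Pr}[Y_t>0,\ t=0,\dots,\overline{\omega}\ln n]$ with $Y_0=2$. I would first partition $[0,\overline{\omega}\ln n]$ into the same $L$ blocks $\Delta_k$ and run the identical coupling: on $\Delta_k$ the increment $Z_t\mid Y_t\sim\mathbf{B}[n-t-Y_t,p(t)]$ is dominated by $\mathbf{B}[n,\lambda(\omega_k)/n]$, so Lemma~\ref{lemmadayu} together with the Chernoff estimate on the auxiliary walk yields the per-block bound $\ln\mathbf{Pr}[\Delta_k]\leq M_k(d_k)\,\tfrac{\overline{\omega}}{L}\ln n$ exactly as before. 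This step never sees the starting value, so it transfers with no change.

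The initial value enters only through the survival constraints on $d_k=(y_k-y_{k-1})/(\overline{\omega}\ln n/L)$. With $Y_0=2$ and $y_k\geq1$ the telescoped partial sums now satisfy $d_1'+\cdots+d_k'\geq d_1+\cdots+d_k=(y_k-2)/(\overline{\omega}\ln n/L)\geq-L/(\overline{\omega}\ln n)$, i.e.\ they are nonnegative only up to an $o(1)$ slack. In the Abel summation for $\sum_k d_k'\ln\lambda(\omega_k)$ the boundary term still vanishes because $\ln\lambda(\omega_L)=\ln\lambda(\overline{\omega})=\ln1=0$; the remaining sum, previously $\leq0$, now picks up at most $\tfrac{L}{\overline{\omega}\ln n}\big[\ln\lambda(\omega_L)-\ln\lambda(\omega_1)\big]=\tfrac{L}{\overline{\omega}\ln n}\ln L$, which after the prefactor $\tfrac{\overline{\omega}}{L}\ln n$ contributes only $\ln L$ to $\ln H$, hence $o(1)$ to the final exponent. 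The convexity step needs no change at all: $g(x)=(1+x)\ln(1+x)-x\geq0$ for every $x>-1$, so $\sum_k g(d_k')\geq0$ regardless of the sign of the partial sums, and this term still only helps.

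The genuinely new obstacle is the Darboux/integral comparison in Step~2, because at $\epsilon=1$ the integrand $f(\omega)=1-\lambda(\omega)+\ln\lambda(\omega)=1-r\omega+\ln(r\omega)$ diverges to $-\infty$ as $\omega\to0^+$. The crude estimate used in Proposition~\ref{m=1}, bounding the gap between the upper Darboux sum $\tfrac{\overline{\omega}}{L}\sum_k f(\omega_k)$ and $\int_0^{\overline{\omega}}f$ by $[f(\overline{\omega})-f(0)]\cdot\max_k(\omega_k-\omega_{k-1})$, is now vacuous since $f(0)=-\infty$. I would instead isolate the singular first block: writing $\delta=\overline{\omega}/L$, the gap equals $\sum_k\int_{(k-1)\delta}^{k\delta}[f(k\delta)-f(\omega)]\,d\omega$, whose $k\geq2$ part telescopes (using monotonicity of $f$) to at most $\delta[f(\overline{\omega})-f(\delta)]=O(\ln L/L)$, while the single block $k=1$ is handled directly from the integrability of $\ln(r\omega)$ near $0$, giving $\delta f(\delta)-\int_0^{\delta}f=O(1/L)$. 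Thus the upper Darboux sum still converges to $I_{\epsilon,r}$ with error $O(\ln L/L)$, which tends to $0$ as $L\to\infty$, and this singular-block estimate is where I expect the main work to lie. Combining the three contributions gives $\ln H(y_1,\dots,y_L)\leq\big(I_{\epsilon,r}+O(\ln L/L)\big)\ln n+O(\ln L)$.

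Finally, Step~3 transfers unchanged: the two-vertex analogue of Proposition~\ref{sizesmall} holds with the same proof, since replacing $Y_0=1$ by $Y_0=2$ merely shifts the dominating walk by a constant and leaves the bound $\mathbf{Pr}[|\mathcal{Y}_T|+|\mathcal{D}_T|>K_1(c)\ln n]<n^{-c}$ intact. Summing $H(y_1,\dots,y_L)$ over the polynomially many values of $(y_1,\dots,y_L)$ with all $y_j\leq K_1(c)\ln n$, and absorbing the large-$y_j$ tail through the choice $c=2-I_{\epsilon,r}$, then yields $\mathbf{Pr}[|C_{v_1,v_2}|\geq\overline{\omega}\ln n]\leq n^{I_{\epsilon,r}+\varepsilon}$ for every fixed $\varepsilon>0$, which after letting $L\to\infty$ is the claimed bound $n^{I_{\epsilon,r}+o(1)}$.
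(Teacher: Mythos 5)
Your proposal is correct, and its skeleton is the same as the paper's appendix proof: dominate the exploration walk block-by-block via Lemma~\ref{lemmadayu}, apply a Chernoff bound to get $\ln\mathbf{Pr}[\Delta_k]\leq M_k(d_k)\tfrac{\overline{\omega}}{L}\ln n$, control $\sum_k M_k(d_k)$ by the Abel/convexity decomposition plus a Darboux-sum comparison with $I_{\epsilon,r}$, and finish with Proposition~\ref{sizesmall} and a union bound over the grid values $(y_1,\dots,y_L)$. The two places where you deviate are both worth noting. First, the initial condition $Y_0=2$: the paper absorbs it by redefining $d_1=\frac{y_1-y_0+(m-1)}{\overline{\omega}\ln n/L}=\frac{y_1-1}{\overline{\omega}\ln n/L}$ so that the partial sums $d_1+\cdots+d_k$ stay nonnegative and Step~2 of Proposition~\ref{m=1} applies verbatim, at the cost of an extra term $(m-1)\ln s_1=O(\ln L)$ in the bound for $\ln\mathbf{Pr}[\Delta_1]$ (using $\lambda(\omega_1)\geq 1/L$ and $y_1\leq K_1(c)\ln n$); you instead keep the natural $d_k$ and let the partial sums dip to $-L/(\overline{\omega}\ln n)$, paying the resulting $\ln L$ inside the Abel summation. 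These are equivalent bookkeeping choices, each contributing $O(\ln L)=o(\ln n)$ to $\ln H$, and yours has the small advantage of not needing $y_1\leq K_1(c)\ln n$ at that stage. Second, and more substantively: you are right that at $\epsilon=1$ (i.e.\ $p_2=0$, which is exactly the case this corollary is invoked for, to close the gap of \cite{COW}) the integrand $1-\lambda(\omega)+\ln\lambda(\omega)$ diverges as $\omega\to0^+$, so the error bound $-\big(\epsilon+\ln(1-\epsilon)\big)\overline{\omega}/L$ used in Step~2 of Proposition~\ref{m=1} is vacuous there; the appendix's statement that Step~2 is ``exactly the same'' silently inherits this defect. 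Your repair --- isolating the singular first block, telescoping the $k\geq2$ blocks to $\delta\big[f(\overline{\omega})-f(\delta)\big]=O(\ln L/L)$, and using integrability of $\ln(r\omega)$ to get $\delta f(\delta)-\int_0^{\delta}f=O(1/L)$ --- is correct, still gives a gap vanishing as $L\to\infty$, and is precisely what is needed to make the argument complete at $\epsilon=1$ (for $\epsilon<1$ your scheme works a fortiori, since then $-\ln\lambda(\omega_1)\leq-\ln(1-\epsilon)$ is bounded). So your proposal not only proves the statement but patches a genuine soft spot in the paper's own proof.
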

The proof of Corollary \ref{m2} follows the technique of Proposition \ref{m=1}. Because very minor modifications are needed, we move the proof to the appendix.

From Corollary \ref{m2}, if we let $I_{\epsilon,r}<-2$, then
\begin{equation*}
\mathbf{Pr}[\exists\text{ two vertices }v_1,v_2,\text{ s.t. } |C_{v_1,v_2}|\geq K_0\ln n]=\binom n2\cdot n^{I_{\epsilon,r}+o(1)}=o(1),
\end{equation*}
which implies that if  $I_{\epsilon,r}<-2$, then w.h.p. $\mathbb{G}$ is not propagation connected.

Recall the results in Theorem \ref{COW} obtained by \cite{COW}, and note that if we let $p_2=0$ (i.e. $\epsilon=1$), then the condition $I_{\epsilon,r}<-2$ is equivalent to $r<0.25$. Now we can say that if $r<0.25$, then $\mathbf{H}(n,p)$ fails to be propagation connected w.h.p., which shows that the propagation connectivity of 3-uniform hypergraph has a sharp phase transition. Therefore our results close the gap left by \cite{COW}.

\subsection{Supercritical regime: $I_{\epsilon,r}>-1$}

From previous results, we know that once the propagation process of a vertex survives up to $O(\ln n)$ steps, it may be possible to survive until the end, i.e., the entire hypergraph $\mathbb{G}$
can be propagation connected. This leads us to the definition of ``good'' vertices.
\begin{definition}
We call a vertex $v$ good, if the propagation process starting from $v$ survives at least $K_0\ln n$ steps, i.e. $|C_v|> K_0\ln n$.
\end{definition}

\begin{proposition}\label{progreater}
For a random vertex $v$, we  have
$$\mathbf{Pr}[|C_v|\geq K_0\ln n]\geq n^{I_{\epsilon,r}-o(1)}.$$
\end{proposition}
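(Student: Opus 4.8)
The plan is to match the upper bound of Proposition~\ref{m=1} from below by exhibiting an explicit, only mildly unlikely, way for the propagation process to survive the whole subcritical/critical window. Because $K_0\geq\overline{\omega}=\frac{\epsilon}{r}$ and $\lambda(\omega)\geq1$ for $\omega\geq\overline{\omega}$, I would first reduce to showing
\[
\mathbf{Pr}\big[Y_t>0,\ \forall\,t\le\overline{\omega}\ln n\big]\ \geq\ n^{I_{\epsilon,r}-o(1)},
\]
and then observe that, by the Markov property, once the process is still alive at time $\overline{\omega}\ln n$ the nonnegative drift $\lambda(\omega)-1\geq0$ on $[\overline{\omega}\ln n,K_0\ln n]$ keeps it alive up to $K_0\ln n$ with conditional probability at least $n^{-o(1)}$ (indeed $\Omega(1)$, by comparison with a critical walk). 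Throughout I would work on the event of Proposition~\ref{sizesmall} that $t+Y_t\le K_1\ln n$, so that $n-t-Y_t=n(1-o(1))$ and $Z_t$ is, up to negligible corrections, $\mathbf{B}[n,p(t)]$ with $np(t)=\lambda(t/\ln n)(1+o(1))$.

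I would keep the $L$-block decomposition of Proposition~\ref{m=1}, $\Delta_k=[t_{k-1},t_k]$ with $\omega_k=\frac{k}{L}\overline{\omega}$, but now \emph{lower}-bound each block. On $\Delta_k$ I replace the true increments by the stochastically smaller i.i.d. family $W\sim\mathbf{B}[\,n-K_1\ln n,\ p(t_{k-1})\,]$; since $p(t)$ is increasing and $n-t-Y_t\ge n-K_1\ln n$, one checks the hypotheses of Lemma~\ref{lemmadayu} (here the dominated law is constant in its state variable, so condition~1 is just monotonicity of a CDF, and condition~2 is $\mathbf{B}[n-t-z,p(t)]\succ\mathbf{B}[n-K_1\ln n,p(t_{k-1})]$ for $t+z\le K_1\ln n$). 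Choosing the target path $y_k\equiv y_{k-1}$, the $d_k=0$ path that the analysis of Proposition~\ref{m=1} identifies as optimal, and using that raising the starting height only helps survival, Lemma~\ref{lemmadayu} gives
\[
\mathbf{Pr}\big[\text{actual survives }\Delta_k,\ Y_{t_k}\ge y_{k-1}\big]\ \geq\ \mathbf{Pr}\big[W\text{-walk survives }\Delta_k,\ \text{net change}\ge0\big],
\]
and these blocks chain multiplicatively through the Markov property.

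The core of the argument is then a large-deviation \emph{lower} bound for this i.i.d. subcritical block, which I would obtain by exponential tilting. Writing $\mu_k=(n-K_1\ln n)p(t_{k-1})=\lambda(\omega_{k-1})(1-o(1))<1$, I tilt $W$ to mean $1$, so the tilted walk is critical, with tilted increments i.i.d. and, in the Poisson approximation, $\mathrm{Poi}(1)$, the Radon--Nikodym factor per step being $e^{1-\mu_k}\mu_k^{W}$. Restricting to the event that the block survives and has net change in $[0,A\sqrt{N_k}]$ with $N_k=\frac{\overline{\omega}}{L}\ln n$ and $A=\Theta(\sqrt{\ln\ln n})$, the Radon--Nikodym factor is bounded below by $\exp\big(N_k(1-\mu_k+\ln\mu_k)-o(\ln n)\big)$, while the tilted probability of this event is $\Omega(1/N_k)=n^{-o(1)}$: survival is exactly the event $A$ of the ballot-type first lemma of Section~4.1, which lower-bounds it by $\frac1{N_k}\mathbf{Pr}[\text{net}\ge0]\to\frac1{2N_k}$, and the extra upper cap on the net change costs only $e^{-cA^2}=o(1/N_k)$. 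Hence each block contributes $n^{(1-\lambda(\omega_{k-1})+\ln\lambda(\omega_{k-1}))\overline{\omega}/L-o(1)}$, and since $1-\lambda+\ln\lambda$ is increasing the left-endpoint sum $\sum_k(1-\lambda(\omega_{k-1})+\ln\lambda(\omega_{k-1}))\frac{\overline{\omega}}{L}$ is the lower Darboux sum of $I_{\epsilon,r}$; letting $L\to\infty$ after $n\to\infty$ yields $n^{I_{\epsilon,r}-o(1)}$.

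The main obstacle is precisely this lower bound on a polynomially small survival probability: whereas Proposition~\ref{m=1} gets its upper bound for free from Markov's inequality applied to a generating function, a matching lower bound forces a genuine change of measure, and the delicate point is to control the tilt, the Poisson approximation of the binomials, and the mean-zero fluctuation $\sum_t(W_t-1)\ln\mu_k$ all to within $n^{o(1)}$ \emph{simultaneously}, while the critical-walk survival probability $\asymp1/\ln n$ is only just large enough to absorb the $e^{-cA^2}$ cost of the atypical-fluctuation event --- which is why the cap must be set at scale $\sqrt{\ln n\,\ln\ln n}$ rather than $\sqrt{\ln n}$. A secondary but necessary check is that, after conditioning on the Proposition~\ref{sizesmall} event, the replacements of $n-t-Y_t$ by $n-K_1\ln n$ and of $p(t)$ by $p(t_{k-1})$ really do produce a stochastic minorant, so that Lemma~\ref{lemmadayu} applies in the surviving direction, rather than merely a heuristic.
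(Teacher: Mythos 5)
Your proposal is correct in substance and shares the paper's skeleton: the reduction to the window $[0,\overline{\omega}\ln n]$ with a cheap $n^{-o(1)}$ continuation on $[\overline{\omega}\ln n,K_0\ln n]$, the $L$-block partition $\Delta_k$, an i.i.d.\ stochastic minorant justified through Lemma \ref{lemmadayu} (the paper takes $\mathbf{B}[n-b,p(a)]$ on each block $[a,b]$, you take $\mathbf{B}[n-K_1\ln n,p(t_{k-1})]$ --- same idea), the cyclic/ballot lemma of Section 4.1 to reduce survival of the minorant walk to a one-dimensional tail event, and the lower Darboux-sum passage to $I_{\epsilon,r}$. Where you genuinely diverge is the core large-deviation \emph{lower} bound on each subcritical block. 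The paper needs no change of measure at all: it bounds $\mathbf{Pr}[Z^{(3)}\geq b-a]$ below by the single binomial point mass
\begin{equation*}
\mathbf{Pr}[Z^{(3)}=b-a]=\binom{(n-b)(b-a)}{b-a}\,p(a)^{b-a}\bigl(1-p(a)\bigr)^{(n-b-1)(b-a)},
\end{equation*}
and a Stirling-type estimate of this expression already produces the exponent $(b-a)\bigl(1-\lambda+\ln\lambda\bigr)-O(\ln(b-a))$, after which the ballot lemma costs only another factor $1/(b-a)$. You instead run the standard Cram\'er tilting to a critical walk, with a Poisson approximation, a fluctuation cap at scale $\sqrt{\ln n\,\ln\ln n}$, and the ballot lemma applied under the tilted law; your arithmetic there is consistent (the cap must indeed sit between $\sqrt{\ln\ln n}\cdot\sqrt{N_k}$ and $o(\ln n)$), and the per-block exponent agrees with the paper's up to $e^{-o(\ln n)}$. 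What the paper's route buys is economy: every approximation you must control to $n^{o(1)}$ accuracy (tilt, Poissonization, Radon--Nikodym fluctuation) simply does not arise. What your route buys is generality, since it does not rely on having an exact point-mass formula for the block sum. One genuine, though harmless, slip: your parenthetical claim that the conditional survival probability on $[\overline{\omega}\ln n,K_0\ln n]$ is $\Omega(1)$ is false --- a near-critical walk started at bounded height survives $\Theta(\ln n)$ steps only with probability decaying like a power of $1/\ln n$ --- but the weaker bound $n^{-o(1)}$, which is what you actually use and what the paper proves via the same ballot-lemma estimate on that interval, is all that is needed.
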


\begin{proof}

To estimate the probabilisty that $Y_t>0$ holds for all $t\in[0,K_0\ln n]$, we first consider some interval $t\in[a,b]\subset[0,K_0\ln n]$. Suppose at the beginning $Y_a=y>0$, and the propagation process is
\begin{equation*}
   \left\{
 \begin{aligned}
 &Y_a=y,\\
 &Y_{t+1}=Y_{t}+Z_t-1,
 \end{aligned}
 \right.
 \end{equation*}
 where $$Z_t|Y_t\sim \mathbf{B}[n-t-Y_t,p(t)],\quad p(t)=1-(1-p_2)(1-p_3)^t.$$
We define another Markov process which corresponds to it as
\begin{equation*}
   \left\{
 \begin{aligned}
 &Y_a^{(3)}=y,\\
 &Y_{t+1}^{(3)}=Y_{t}^{(3)}+Z_t^{(3)}-1,
 \end{aligned}
 \right.
 \end{equation*}
 where $$Z_t^{(3)}|Y_t^{(3)}\sim \mathbf{B}[n-b,p(a)].$$

By lemma \ref{lemmadayu}, $\mathbf{B}[n-t-Y_t,p(t)]\succ \mathbf{B}[n-b,p(a)]$ holds for all $Y_t\leq b-t$, where we take $M=b-a\geq y,q=1,l_a=l_{a+1}=...=l_b=0$. Therefore
\begin{align*}
\mathbf{Pr}\big[\forall t\in[a,b],Y_t>0|Y_a=y\big]
&\geq\mathbf{Pr}\big[\forall t\in[a,b],Y_t^{(3)}>0|Y_a^{(3)}=y\big]\\
&\geq\mathbf{Pr}[z^{(3)}\geq b-a],
\end{align*}
where
$Z^{(3)}\equiv Y_b^{(3)}-Y_a^{(3)}+b-a=Z_a^{(3)}+...+Z_{b-1}^{(3)}$. Thus
\begin{align*}
Z^{(3)}\thicksim \mathbf{B}[(n-b)(b-a),p(a)].
\end{align*}
As a result,
\begin{align*}
\mathbf{Pr}[Z^{(3)}=b-a]=\binom{(n-b)(b-a)}{b-a}p(a)^{b-a}\big(1-p(a)\big)^{(n-b-1)(b-a)}.
\end{align*}

Note that
\begin{align*}
\ln\binom{(n-b)(b-a)}{b-a}
\geq&\ln\frac{[(n-b-a)(b-a)]^{b-a}}{(b-a)!}\\
>&(b-a)\big[\ln(n-b-1)+1\big]-\ln(b-a)-2.
\end{align*}
Now  we have
\begin{align*}
&\ln\mathbf{Pr}[Z^{(3)}=b-a]\\
\geq&
(b-a)\big[\ln(n-b-1)+1+\ln p(a)-(n-b-1)p(a)\big]-\ln(b-a)-2\\
=&(b-a)\Big(1+\ln\big((n-b-a)p(a)\big)-(n-b-a)p(a)\Big)-\ln(b-a)-2.
\end{align*}

Similarly, we consider the partition of the interval $[0,\overline{\omega}\ln n]$: $\Delta_k=[\omega_{k-1}\ln n,\omega_k\ln n],k=1,...,L$. Take $[a,b]=\Delta_k$, then $(n-b-1)p(a)=(1+o(1))\lambda(\omega_{k-1})$.

Since random variables $Z_a^{(3)}-1,...,Z_{b-1}^{(3)}-1$ are identically distributed, by lemma \ref{lemmadayu},
\begin{align*}
&\mathbf{Pr}\big[\forall t\in[a,b],Y_t>0|Y_a=y\big]\geq\mathbf{Pr}\big[\forall t\in[a,b],Y_t^{(3)}>0|Y_a^{(3)}=y\big]\\
\geq&\mathbf{Pr}\big[\forall k=0,1,...,b-a-1, Z_a^{(3)}-1+...+Z_{a+k}^{(3)}-1\geq0\big]\\
\geq&\frac{1}{b-a}\mathbf{Pr}[Z^{(3)}\geq b-a].
\end{align*}
Therefore,
\begin{align*}
&\mathbf{Pr}\big[\forall t\in[a,b],Y_t>0|Y_a>0\big]\geq\frac{1}{b-a}\mathbf{Pr}[Z^{(3)}\geq b-a]\\
\geq &(b-a)\Big(1+\ln\big((n-b-a)p(a)\big)-(n-b-a)p(a)\Big)-2\ln(b-a)-2.
\end{align*}
Consequently,
\begin{align*}
&\ln\mathbf{Pr}\big[\forall t\in[0,\overline{\omega}\ln n\big], Y_t>0|Y_0=1]=\sum_{\Delta_k}\ln\mathbf{Pr}[\forall t\in \Delta_k,Y_t>0|Y_0=1]\\
\geq&\sum_{\Delta_k}(b-a)\Big(1+\ln\big[(n-b-a)p(a)\big]-(n-b-a)p(a)\Big)-2\ln(b-a)-2\\
\geq&\ln n\sum_{\Delta_k}(\omega_k-\omega_{k-1})\big[1+\ln \lambda(\omega_{k-1})-\lambda(\omega_{k})+o(1)\big]-O(L\ln\ln n)\\
=&\ln n\sum_{\Delta_k}(\omega_k-\omega_{k-1})\big[1+\ln \lambda(\omega_{k-1})-\lambda(\omega_{k})\big]-o(\overline{\omega}\ln n)-O(L\ln\ln n),
\end{align*}
where the term $o(1)$ is of order $O(1/\ln n)$.

Note that $\sum_{\Delta_k}(\omega_k-\omega_{k-1})\big(1+\ln \lambda(\omega_{k-1})-\lambda(\omega_{k})\big)$ is a lower Darboux sum of the integral $I_{\epsilon,r}=\int_{0}^{\overline{\omega}}\big[1-\lambda(\omega)+\ln \lambda(\omega)\big]d\omega$ under the partition $\Delta_k=[\omega_{k-1}\ln n,\omega_k\ln n],k=1,...,L$. Also note that the integrand is increasing with $\omega$, thus
\begin{align*}
I_{\epsilon,r}-&\sum_{\Delta_k}(\omega_k-\omega_{k-1})\big(1+\ln \lambda(\omega_{k-1})-\lambda(\omega_{k})\big)\\
\leq&\Big[\big(1-\lambda(\omega)+\ln \lambda(\omega)\big)-\big(1-\lambda(0)+\ln \lambda(0)\big)\Big]\cdot\max(\omega_k-\omega_{k-1})\\
=&-\big(\epsilon+\ln(1-\epsilon)\big)\frac{\overline{\omega}}{L}.
\end{align*}
Now we have
\begin{align*}
&\ln\mathbf{Pr}\big[\forall t\in[0,\overline{\omega}\ln n], Y_t>0|Y_0=1\big]-I_{\epsilon,r}\ln n\\
\geq&-\big(\epsilon+\ln(1-\epsilon)\big)\frac{\overline{\omega}}{L}\ln n-o(\overline{\omega}\ln n)-O(L\ln\ln n)=-o(\ln n).
\end{align*}
\\

Next we take another interval $[a',b']\equiv[\overline{\omega}\ln n,K_0\ln n]$, then $(n-b'-1)p(a')=1-o(1)$.
\begin{align*}
&\ln\mathbf{Pr}[\forall t\in[a',b'], Y_t>0|Y_{a'}>0]\\
\geq&(b'-a')\Big(1+\ln\big((n-b'-a')p(a')\big)-(n-b'-a')p(a')\Big)-2\ln(b'-a')-2\\
=&(b'-a')o(1)-2\ln(b'-a')-2=-o(\ln n).
\end{align*}
Consequently,
\begin{align*}
\ln\mathbf{Pr}\big[\forall t\in[0,\overline{\omega}\ln n], Y_t>0|Y_0=1\big]-I_{\epsilon,r}\ln n
\geq-o(\ln n),
\end{align*}
which leads to
\begin{align*}
\mathbf{Pr}\big[|C_v|\geq K_0\ln n\big]\geq n^{I_{\epsilon,r}-o(1)}.
\end{align*}
\end{proof}

With Proposition \ref{progreater} in mind, and note that $I_{\epsilon,r}>-1$ is a constant, it is straightforward to get the following result.
\begin{proposition}
Let $X$ be the number of good vertices. There exists a constant $\varepsilon>0$, such that $$\mathbf{E}[X]>n^\varepsilon.$$
\end{proposition}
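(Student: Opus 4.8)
The plan is to bound $\mathbf{E}[X]$ from below by linearity of expectation, feeding in the per-vertex survival estimate already proved in Proposition \ref{progreater}. First I would write $X=\sum_{v\in V}\mathbf{1}_{\{v\ \mathrm{good}\}}$, where by definition $v$ is good exactly when $|C_v|>K_0\ln n$. Linearity then gives $\mathbf{E}[X]=\sum_{v\in V}\mathbf{Pr}[v\ \mathrm{is\ good}]$. Since the $n$ vertices play symmetric roles in $\mathbb{G}(n,p_2,p_3)$, every summand equals the single quantity $\mathbf{Pr}[|C_{v_0}|\geq K_0\ln n]$, which Proposition \ref{progreater} bounds below by $n^{I_{\epsilon,r}-o(1)}$. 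Hence
$$\mathbf{E}[X]\geq n\cdot n^{I_{\epsilon,r}-o(1)}=n^{1+I_{\epsilon,r}-o(1)}.$$

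The second step is merely to absorb the $o(1)$ in the exponent. In the supercritical regime the standing hypothesis $I_{\epsilon,r}>-1$ guarantees that $\delta=1+I_{\epsilon,r}$ is a strictly positive constant. Choosing any constant $\varepsilon$ with $0<\varepsilon<\delta$, the exponent $1+I_{\epsilon,r}-o(1)=\delta-o(1)$ exceeds $\varepsilon$ for all sufficiently large $n$, so $\mathbf{E}[X]>n^\varepsilon$, as claimed.

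There is essentially no genuine obstacle at this stage: the entire analytic content lives in Proposition \ref{progreater}, whose proof supplies the delicate lower bound on the survival probability of one propagation process via the lower-Darboux-sum comparison of the associated Markov additive process. Once that estimate is in hand, the only thing to verify here is that multiplying by the factor $n$ coming from the $n$ candidate starting vertices raises the exponent above zero precisely when $I_{\epsilon,r}>-1$, and the freedom to choose $\varepsilon$ strictly below $1+I_{\epsilon,r}$ neutralizes the vanishing error term. I would close by noting that this lower bound on $\mathbf{E}[X]$ is the natural springboard for a subsequent second-moment argument, which upgrades ``good vertices exist in expectation'' to ``good vertices exist w.h.p.'' and, together with Lemma \ref{middlesize} ruling out components of intermediate size, forces such a surviving process to reach all of $\mathbb{G}$.
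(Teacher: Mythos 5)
Your proposal is correct and is exactly the argument the paper has in mind: the paper states this proposition as an immediate consequence of Proposition \ref{progreater} together with the hypothesis $I_{\epsilon,r}>-1$, and your linearity-of-expectation computation $\mathbf{E}[X]=n\cdot\mathbf{Pr}[v\text{ is good}]\geq n^{1+I_{\epsilon,r}-o(1)}$, followed by choosing $0<\varepsilon<1+I_{\epsilon,r}$ to absorb the $o(1)$, is precisely the ``straightforward'' step the authors leave implicit. No gaps; the only cosmetic point is the harmless mismatch between the strict inequality in the definition of good ($|C_v|>K_0\ln n$) and the weak one in Proposition \ref{progreater}, which the paper itself glosses over and which the $o(1)$ in the exponent absorbs.
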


The following result guarantees the existence of ``good'' vertices, which implies that random hypergraph $\mathbb{G}$ can be propagation connected.
\begin{lemma}
Let $X$ be the number of good vertices. Then
 $$\mathbf{Pr}[X>0]=1-o(1).$$
\end{lemma}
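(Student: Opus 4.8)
The plan is to establish $\mathbf{Pr}[X>0]=1-o(1)$ by the second moment method. The preceding proposition gives $\mathbf{E}[X]>n^{\varepsilon}\to\infty$ for some constant $\varepsilon>0$, so by the Cauchy--Schwarz (Paley--Zygmund) inequality
\begin{align*}
\mathbf{Pr}[X>0]\geq\frac{\mathbf{E}[X]^2}{\mathbf{E}[X^2]},
\end{align*}
and it therefore suffices to show $\mathbf{E}[X^2]\leq(1+o(1))\mathbf{E}[X]^2$. Writing $X=\sum_{v\in V}\mathbf{1}_{\{v\ \mathrm{good}\}}$ and $p=\mathbf{Pr}[v\ \mathrm{good}]=n^{I_{\epsilon,r}\pm o(1)}$ (the upper bound from Proposition \ref{m=1}, the lower bound from Proposition \ref{progreater}), I would expand
\begin{align*}
\mathbf{E}[X^2]=\mathbf{E}[X]+\sum_{u\neq v}\mathbf{Pr}[u,v\ \mathrm{good}].
\end{align*}
Since $\mathbf{E}[X]\geq n^{\varepsilon}\geq1$ we have $\mathbf{E}[X]\leq\mathbf{E}[X]^2$, so the diagonal term is negligible and the entire task reduces to showing that the off-diagonal sum is at most $(1+o(1))\mathbf{E}[X]^2=(1+o(1))n^2p^2$; that is, that the events ``$u$ good'' and ``$v$ good'' are asymptotically independent.

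To decorrelate the two events I would first reveal the propagation process from $u$, recording its explored vertex set $S_u=\mathcal{Y}_T\cup\mathcal{D}_T$ truncated at $T=K_0\ln n$ steps. By Proposition \ref{sizesmall} (with a large constant $c$, say $c=3$) we have $|S_u|\leq K_1(c)\ln n$ except on an event of probability $n^{-c}$; this bad event contributes at most $n^2\cdot n^{-c}=o(\mathbf{E}[X]^2)$ to the sum and is discarded. On the good event I would split $\mathbf{Pr}[u,v\ \mathrm{good}]$ according to whether the process from $v$ ever activates a vertex of $S_u$. If it does, there must be a present edge joining the $O(\ln n)$ vertices explored from $v$ to the $O(\ln n)$ vertices of $S_u$; counting the relevant $2$- and $3$-edges (the latter having two endpoints in $C_v$ and one in $S_u$) and using $p_2,p_3=O(1/n)$, this has probability $O((\ln n)^2/n)$, so this contribution is at most $n^2\cdot p\cdot O((\ln n)^2/n)=O(np(\ln n)^2)$.

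The main term is the case where the process from $v$ avoids $S_u$. The key observation is that $u$'s exploration only ever queried edges incident to $S_u$ (every query is made at an active vertex $v_t\in S_u$, possibly together with an inactive $\omega\in S_u$), hence it revealed nothing about edges whose endpoints all lie in $V\setminus S_u$. Consequently, on the event that $v$ avoids $S_u$, the process from $v$ uses only fresh edges inside $V\setminus S_u$ and runs on $n-|S_u|=n(1-o(1))$ vertices. Its increments are $\mathbf{B}[(n-|S_u|)-t-Y_t,p(t)]\prec\mathbf{B}[n-t-Y_t,p(t)]$, so by the domination Lemma \ref{lemmadayu} its survival probability is at most that of the unconditioned process, namely $\mathbf{Pr}[v\ \mathrm{good}\mid\mathcal{F}_u]\leq(1+o(1))p$. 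Summing over the $n^2$ pairs, the off-diagonal sum is at most
\begin{align*}
(1+o(1))\,n^2p^2+O\big(np(\ln n)^2\big)+o(\mathbf{E}[X]^2).
\end{align*}

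Finally, I would invoke the quantitative role of the hypothesis: $I_{\epsilon,r}>-1$ forces $np=n^{1+I_{\epsilon,r}-o(1)}$ to be a genuine positive power of $n$, so $(\ln n)^2=o(np)$ and hence $np(\ln n)^2=o(n^2p^2)=o(\mathbf{E}[X]^2)$. This yields $\mathbf{E}[X^2]\leq(1+o(1))\mathbf{E}[X]^2$ and therefore $\mathbf{Pr}[X>0]\geq1-o(1)$, as claimed. The step I expect to be the main obstacle is the rigorous decorrelation: one must argue that conditioning on the \emph{entire} outcome of $u$'s exploration, which reveals the \emph{absence} of many edges leaving $S_u$, does not inflate the survival probability of $v$'s process. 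This is precisely where the two facts above are used---that the revealed information concerns only edges meeting $S_u$, so on the avoidance event the relevant edges are genuinely independent, and that a smaller vertex pool can only shrink survival probability via Lemma \ref{lemmadayu}. Combined with Lemma \ref{middlesize}, which rules out components of intermediate size, the existence of a good vertex then forces a full-size propagation component, so that $\mathbb{G}$ is propagation connected w.h.p.
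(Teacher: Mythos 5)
Your proposal is correct and takes essentially the same route as the paper's own proof: a second-moment (Paley--Zygmund) argument in which $\mathbf{Pr}[u,v\text{ good}]$ is split according to whether the two explorations intersect, with Proposition \ref{sizesmall} bounding the explored set by $O(\ln n)$, a per-step union bound giving $O\big((\ln n)^2/n\big)$ for the intersection case, and the domination Lemma \ref{lemmadayu} showing that on the avoidance event the second process (running on the reduced vertex pool with fresh edges) survives with probability at most $\mathbf{Pr}[v\text{ good}]$, all made negligible or matched against $\mathbf{E}[X]^2$ using $\mathbf{Pr}[v\text{ good}]\geq n^{-1+\varepsilon}$ from $I_{\epsilon,r}>-1$. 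The only differences are cosmetic bookkeeping choices (e.g., $c=3$ versus the paper's $c=2-I_{\epsilon,r}$, and summing over pairs rather than working with conditional probabilities).
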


\begin{proof}

First we estimate the probability that vertices $v_0$ and $v_1$ are both good. To start with, we consider the conditional probability $\mathbf{Pr}[v_1\text{ is good}|v_0\text{ is good}]$.

Assume that $\{\mathcal{Y}_t,\mathcal{D}_t\}$ and $\{\mathcal{Y}_t',\mathcal{D}_t'\}$ represent two propagation processes starting from $v_0$ and $v_1$, respectively. Recall the constant $K_0$ which is defined in Lemma \ref{middlesize}, then during the first $K_0\ln n$ steps, there are two cases:

$\mathbf{(1) }$ We denote by $B$ the event that there exists some $t\leq K_0\ln n$ such that two sets $\mathcal{Y}_{t}'\bigcup \mathcal{D}_{t}'$ and $\mathcal{Y}_{K_0\ln n}\bigcup \mathcal{D}_{K_0\ln n}$ have common vertices, i.e., $$\big(\mathcal{Y}_{t}'\bigcup \mathcal{D}_{t}'\big)\bigcap\big(\mathcal{Y}_{K_0\ln n}\bigcup \mathcal{D}_{K_0\ln n}\big) \neq\emptyset.$$

$\mathbf{(2) }$ Denote by $\overline{B}$ the event that, for any $t\leq K_0\ln n$, the two sets $\mathcal{Y}_{t}'\bigcup \mathcal{D}_{t}'$ and  $\mathcal{Y}_{K_0\ln n}\bigcup \mathcal{D}_{K_0\ln n}$ have no common vertices, i.e.,  $$\big(\mathcal{Y}_{t}'\bigcup \mathcal{D}_{t}'\big)\bigcap\big(\mathcal{Y}_{K_0\ln n}\bigcup \mathcal{D}_{K_0\ln n}\big)=\emptyset.$$\\

Firstly, we consider the case (1).

Denote  $R=\mathcal{Y}_{K_0\ln n}\cup\mathcal{D}_{K_0\ln n}$ the set of vertices connected from $v_0$. Then by Proposition \ref{sizesmall}, for $c=2-I_{\epsilon,r}$ there exists a constant $K_1(c)$ such that $\mathbf{Pr}[|R|>K_1(c)\ln n]<n^{-c}$. Let $B_t$ be the event that the two propagation processes starting from $v_0$ and $v_1$ do not intersect in the first $t-1$ steps, but intersect at the $t$-th step. In fact, event $B$ is the union of all $B_t$ ($t=1,...,K_0\ln n$).
\begin{align}
&\nonumber\mathbf{Pr}\big[v_1\text{ is good}, B\big|v_0\text{ is good}\big]\leq\mathbf{Pr}\big[ B\big|v_0\text{ is good}\big]\\
\nonumber\leq&\mathbf{Pr}\big[B,|R|>K_1(c)\ln n\big|v_0\text{ is good}\big]+\mathbf{Pr}\big[B,|R|\leq K_1(c)\ln n,\big|v_0\text{ is good}\big]\\
\nonumber\leq&\mathbf{Pr}\big[|R|>K_1(c)\ln n\big|v_0\text{ is good}\big]+\mathbf{Pr}\big[B\big||R|\leq K_1(c)\ln n,v_0\text{ is good}\big]\\
\nonumber\leq&\frac{n^{I_{\epsilon,r}-2}}{\mathbf{Pr}[v_0\text{ is good}]}+\sum_{t=0}^{K_0\ln n}\mathbf{Pr}\big[B_t\big||R|\leq K_1(c)\ln n,v_0\text{ is good}\big]\\
\nonumber\leq&n^{-2+\varepsilon}+\sum_{t=0}^{K_0\ln n}|R|\cdot p(t)\leq n^{-2+\varepsilon}+\sum_{t=0}^{K_0\ln n}O[\ln n/n]\\
=&O\big((\ln n)^2/n\big),
\end{align}
where the constant terms implicit in $O$ is at most $K_0\cdot K_1(c)\cdot \lambda(K_0)$.
The second to last inequality is because that, conditioned on the information of the propagation process starting from $v_0$, the event $B_t$ is equivalent to that the process from $v_1$ connects vertices in $R$ at step $t$. Recall that the probability for a vertex to be connected by a process at time $t$ is $p(t)$, and there are $|R|$ vertices in $R$, thus  the inequality follows naturally.
\\

Secondly, we consider the case (2).

Since the two sets $\mathcal{Y}_{K_0\ln n}\bigcup \mathcal{D}_{K_0\ln n}$ and $\mathcal{Y}_{K_0\ln n}'\bigcup \mathcal{D}_{K_0\ln n}'$ have no intersections, then the first $K_0\ln n$ steps of the propagation process which starts from vertex $v_1$ contains no vertices from the set $\mathcal{Y}_{K_0\ln n}\bigcup \mathcal{D}_{K_0\ln n}$. In fact in this case, $v_1$ has the following Markov additive process:
\begin{equation*}
   \left\{
 \begin{aligned}
 &Y_0'=1,\\
 &Y_{t+1}'=Y_{t}'+Z_t'-1,
 \end{aligned}
 \right.
 \end{equation*}
where \begin{equation}\label{Ztdis2}
Z_t'|Y_{t}'\sim \mathbf{B}[n-t-|R|-Y_{t}',p(t)],p(t)=1-(1-p_2)(1-p_3)^{t}.
\end{equation}

Note that $\mathbf{B}[n-t-z,p(t)]\succ \mathbf{B}[n-t-|R|-z,p(t)]$ holds for all $z$.
Applying lemma \ref{lemmadayu}, if $t=K_0\ln n$, then we have
 \begin{align}
 \mathbf{Pr}[Y_t'\geq 0, Y_1'>0,...,Y_{t-1}'>0]\leq \mathbf{Pr}[Y_t\geq 0, Y_1>0,...,Y_{t-1}>0].
 \end{align}
This means that, the probability of the propagation process starting from $v_1$ survives up to time $K_0\ln n$ is less than that of $v_0$, i.e.,
\begin{align}
\mathbf{Pr}[v_1\text{ is good}, \overline{B}|v_0\text{ is good}]\leq\mathbf{Pr}[v_1\text{ is good}|\overline{B},v_0\text{ is good}]\leq \mathbf{Pr}[v_1\text{ is good}]= \mathbf{Pr}[v_0\text{ is good}].
\end{align}
Therefore, we combine case (1) and case (2) to get
\begin{align}
\nonumber\mathbf{Pr}[v_1\text{ is good}|v_0\text{ is good}]&=\mathbf{Pr}[v_1\text{ is good}, B|v_0\text{ is good}]+\mathbf{Pr}[v_1\text{ is good}, \overline{B}|v_0\text{ is good}]\\
\nonumber&\leq O\big((\ln n)^2/n\big)+\mathbf{Pr}[v_0\text{ is good}]\\
&=(1+o(1))\mathbf{Pr}[v_0\text{ is good}].
\end{align}
Recall that $\mathbf{Pr}[v_0\text{ is good}]>n^{-1+\varepsilon}$, as a result,
\begin{align}
\nonumber\mathbf{E}[X^2]&=n(n-1)\mathbf{Pr}[v_1\text{ is good},v_0\text{ is good}]+n\mathbf{Pr}[v_0\text{ is good}]\\
\nonumber&=\mathbf{Pr}[v_0\text{ is good}]\Big(n+n(n-1)\mathbf{Pr}[v_1\text{ is good}|v_0\text{ is good}]\Big)\\
\nonumber&=\mathbf{Pr}[v_0\text{ is good}]\big(n+(1+o(1))n(n-1)\mathbf{Pr}[v_0\text{ is good}]\big)\\
&=(1+o(1))n^2\mathbf{Pr}[v_0\text{ is good}]^2=(1+o(1))\mathbf{E}[X]^2.
\end{align}

Finally, by the Cauchy inequality, we have
$$\mathbf{Pr}[X>0]\geq\frac{\mathbf{E}[X]^2}{\mathbf{E}[X^2]}=1+o(1).$$

\end{proof}
\section{Conclusion}

In this paper, we study the propagation connectivity of a special random hypergraph $\mathbb{G}(n,p_2,p_3)$ which is composed of edges of length 2 and hyperedges of length 3. Using probabilistic arguments, we prove that there is a sharp phase transition of propagation connectivity of $\mathbb{G}(n,p_2,p_3)$ that if $I_{\epsilon,r}<-1$ ($I_{\epsilon,r}$ is a constant defined by the parameters $\epsilon$ and $r$), then w.h.p. there is no vertex that propagation connects all the vertices in $\mathbb{G}$, moreover, every propagation connected component in $\mathbb{G}$ contains no more than $O(\ln n)$ vertices; while if $I_{\epsilon,r}>-1$, then w.h.p. there exist ``good" vertices which can surprisingly connect all vertices in the entire graph  once they can propagation connect more that $O(\ln n)$ vertices.

In view of the possible relevant connection with the discrete, realistic and complex networks,
we believe the above results would be useful to provide a theoretical foundation for our understanding of various complex networks that permeate this information age.\\

\noindent\textbf{Appendix. Proof of Corollary \ref{m2}.}

The proof of Corollary \ref{m2} is a minor modification of Proposition \ref{m=1}.

Note that $K_0=\max\{1,\lambda_1^{-1}(10)\}\geq\frac{\epsilon}{r}$, so we only need to prove
$$\mathbf{Pr}[|C_{\{v_1,v_2\}}|\geq \overline{\omega}\ln n]\leq n^{I_{\epsilon,r}+o(1)},\text{ where }\overline{\omega}=\frac{\epsilon}{r}.$$
Since
$$\mathbf{Pr}[|C_{\{v_1,v_2\}}|\geq\overline{\omega}\ln n]=\mathbf{Pr}[Y_t>0,t=0,1,...,\overline{\omega}\ln n],$$
we only need to estimate the probability that $Y_t>0$ holds for all $t\in[0,\overline{\omega}\ln n]$ if $Y_0 = m$ .

\textbf{Step 1. }Take a large positive integer $L$, we divide the interval $[0,\overline{\omega}\ln n]$ into $L$ intervals: $\Delta_k=[t_{k-1},t_k]\equiv[\omega_{k-1}\ln n,\omega_k\ln n]$, where $\omega_k=\frac kL\overline{\omega}$, $k=1,2,...,L$. We now estimate the following probability on the interval $\Delta_k$.
\begin{align*}
\mathbf{Pr}[\Delta_k]\equiv\mathbf{Pr}\big[\forall t\in[t_{k-1},t_k-1], Y_t>0,Y_{t_k}=y_k|Y_{t_{k-1}}=y_{k-1}\big],
\end{align*}
where $\{y_1,...,y_k\}$ is a random sequence taking nonnegative integers, and $y_0 = 2$ for beginning.

Consider the propagation process on the interval $t\in\Delta_k=[t_{k-1},t_k]$:
\begin{equation*}
   \left\{
 \begin{aligned}
 &Y_{t_{k-1}}=y_{k-1},\\
 &Y_{t+1}=Y_{t}+Z_t-1,
 \end{aligned}
 \right.
 \end{equation*}
 where $$Z_t|Y_t\sim \mathbf{B}[n-t-Y_t,p(t)],\text{ where }p(t)=1-(1-p_2)(1-p_3)^t.$$
Define another process which corresponds to it as:
\begin{equation*}
   \left\{
 \begin{aligned}
 &Y_{t_{k-1}}^{(2)}=y_{k-1},\\
 &Y_{t+1}^{(2)}=Y_{t}^{(2)}+Z_t^{(2)}-1,
 \end{aligned}
 \right.
 \end{equation*}
 where (recall (\ref{funlamda})) $$Z_t^{(2)}|Y_t^{(2)}\sim B\left[n,\frac{\lambda(\omega_k)}{n}\right].$$
Since $p(t)\leq\lambda(\omega_k)/n$, thus
$$\mathbf{B}[n-t-Y_t,p(t)]\prec \mathbf{B}\left[n,\frac{\lambda(\omega_k)}{n}\right].$$
Taking $l_{t_{k-1}+1}=...=l_{t_{k}-1}=0,l_{t_{k}}=y_k-1$, and applying lemma \ref{lemmadayu}, we have
\begin{align}\label{prdelta}
\nonumber\mathbf{Pr}[\Delta_k]&=\mathbf{Pr}\big[\forall t\in[t_{k-1},t_k-1], Y_t>0,Y_{t_k}=y_k|Y_{t_{k-1}}=y_{k-1}\big]\\
\nonumber&\leq\mathbf{Pr}\big[\forall t\in[t_{k-1},t_k-1], Y_t>0,Y_{t_k}\geq y_k|Y_{t_{k-1}}=y_{k-1}\big]\\
\nonumber&\leq\mathbf{Pr}\big[\forall t\in[t_{k-1},t_k-1], Y_t^{(2)}>0,Y_{t_k}^{(2)}\geq y_k|Y_{t_{k-1}}^{(2)}=y_{k-1}\big]\\
&\leq\mathbf{Pr}\big[Y_{t_k}^{(2)}\geq y_k|Y_{t_{k-1}}^{(2)}=y_{k-1}\big].
\end{align}
Denote
$$Z^{(2)}\equiv\sum_{t\in[t_{k-1},t_{k}-1]}Z_t^{(2)}\thicksim B\Big[\frac{\overline{\omega}}{L}n\ln n,\frac{\lambda(\omega_k)}{n}\Big].$$
 The generating function of $Z^{(2)}$ is
\begin{align*}
\mathbf{E}[s^{Z^{(2)}}]=\left(1+\frac{\lambda(\omega_k)}{n}  ( s-1) \right)^{\frac{\overline{\omega}}{L}n\ln n}.
\end{align*}
Denote $$d_k=\frac{y_k-y_{k-1}  }{\overline{\omega}\ln n/L},s_k=\frac{1+d_k}{\lambda(\omega_k)}~~~~ \text{for}~~  k =2,\cdots,L .$$
and when $ k=1 $  denote  $ d_1 =\frac{y_1-y_0 +1  }{\overline{\omega}\ln n/L} = \frac{y_1-1   }{\overline{\omega}\ln n/L} ,s_1=\frac{1+d_1}{\lambda(\omega_1)}. $

If $d_k\geq \lambda(\omega_k)-1$, then $s_k\geq1$. Substitute $s=s_k$ into the generating function, now we can rewrite the inequality (\ref{prdelta}) as (when $k\ge 2$)
\begin{align*}
\ln\mathbf{Pr}[\Delta_k]&\leq\ln\mathbf{Pr}\big[Y_{t_k}^{(2)}\geq y_k|Y_{t_{k-1}}^{(2)}=y_{k-1}\big]
=\ln\mathbf{Pr}\big[Z^{(2)}\geq y_k-y_{k-1}+\frac{\overline{\omega}}{L}\ln n\big]\\
&\leq\ln\frac{\mathbf{E}[s_k^{Z^{(2)}}]}{s_k^{y_k-y_{k-1}+\frac{\overline{\omega}}{L}\ln n}}
\leq(s_k-1)\lambda(\omega_k)\frac{\overline{\omega}}{L}\ln n-\left(y_k-y_{k-1}+\frac{\overline{\omega}}{L}\ln n\right)\ln s_k\\
&=\big(1+d_k-\lambda(\omega_k)\big)\frac{\overline{\omega}}{L}\ln n-(1+d_k)\big[\ln(1+d_k)-\ln \lambda(\omega_k)\big]\frac{\overline{\omega}}{L}\ln n.
\end{align*}
and (when $k=1$)
\begin{align*}
\ln\mathbf{Pr}[\Delta_1]&
\leq(s_1 -1)\lambda(\omega_1 )\frac{\overline{\omega}}{L}\ln n-\left(y_1-y_0+\frac{\overline{\omega}}{L}\ln n\right)\ln s_1 \\
& = (s_1 -1)\lambda(\omega_1 )\frac{\overline{\omega}}{L}\ln n-\left(y_1-y_0+(m-1)+\frac{\overline{\omega}}{L}\ln n\right)\ln s_1 +\ln s_1 \\
& = \big(1+d_1-\lambda(\omega_1)\big)\frac{\overline{\omega}}{L}\ln n-(1+d_1)\big[\ln(1+d_1)-\ln \lambda(\omega_1)\big]\frac{\overline{\omega}}{L}\ln n + \ln s_1.
\end{align*}

Since $\lambda(\omega_1)= 1 - \epsilon + r \omega_1 = 1 - \epsilon +  r \frac{\epsilon }{rL} \ge \frac{1}{L} $ . And we assume that $y_k \le K_1(c) \ln n$ when we use the inequality for $ \ln\mathbf{Pr}[\Delta_k] $, which implies that $d_1 \le \frac{L K_1(c)} {\overline{\omega} } = \frac{L K_1(c) r} { \epsilon  } $. So $ s_1  = \frac{1+d_1}{\lambda(\omega_1)} \le L^2 \times \text{const}$ , and
 $$(m-1) \ln s_1  \le 2(m-1) \ln L + \text{const} = O(\ln L) .$$
By the additive relations of sequence $\{Y_{t_k}=y_k  \ge 1  \}$, we know that $\{d_k\}$ satisfies
$$d_1\geq0,d_1+d_2\geq0,...,d_1+...+d_L\geq0.$$
For simplicity, define a function of $d_k$:
\begin{equation*}
  M_k(d_k)= \left\{
 \begin{aligned}
 &1+d_k-\lambda(\omega_k)-(1+d_k)[\ln(1+d_k)-\ln \lambda(\omega_k)],\text{ \quad if }d_k\geq \lambda(\omega_k)-1,\\
 &0,\text{ \quad\quad\quad\quad\quad\quad\quad \ \ \quad\quad\quad\quad\quad\quad\quad\quad\quad\quad\quad\quad\quad  if }d_k< \lambda(\omega_k)-1.\\
 \end{aligned}
 \right.
 \end{equation*}
Obviously, $ M_k(d_k)$ decreases with $d_k$. Now we have
\begin{align*}
\ln\mathbf{Pr}[\Delta_k]\leq M_k(d_k)\frac{\overline{\omega}}{L}\ln n + O(\ln L)  \mathbf{1} _{k=1} .
\end{align*}

\textbf{Step 2. } Since step 2 is exactly the same with Proposition \ref{m=1}, we omit it to avoid duplication.

\textbf{Step 3. } Finally, returning to the estimate of $\mathbf{Pr}[|C_{\{v_1,v_2\}}|\geq\overline{\omega}\ln n]$.
\begin{align*}
\mathbf{Pr}&[|C_{\{v_1,v_2\}}|\geq\overline{\omega}\ln n]=\mathbf{Pr}[Y_t>0,\forall t=0,1,..,\overline{\omega}\ln n]\\
&=\sum_{y_1>0,...,y_L>0}\textbf{Pr}[Y_t>0, t=0,1,...,\overline{\omega}\ln n; Y_{t_k}=y_k,k=1,...,L]\\
&=\sum_{y_1>0,...,y_L>0}H(y_1,...,y_L).
\end{align*}

Recall Proposition \ref{sizesmall} and take $c=2-I_{\epsilon,r}$, then
\begin{align*}
&\sum_{y_1>0,...,y_j>K_1(c)\ln n,...,y_L>0}H(y_1,...,y_L)\\
\leq&\sum_{y_1>0,...,y_j>K_1(c)\ln n,...,y_L>0}\mathbf{Pr}[Y_{t_k}=y_k,k=1,...,L]=\mathbf{Pr}[Y_{t_j}>K_1(c)\ln n]\\
\leq& n^{-c}=n^{I_{\epsilon,r}-2}.
\end{align*}

Consequently,
\begin{align*}
&\mathbf{Pr}[|C_{\{v_1,v_2\}}|\geq\overline{\omega}\ln n]\\
=&\sum_{y_1\leq K_1(c)\ln n,...,y_L\leq K_1(c)\ln n}H(y_1,...,y_L)+\sum_{j=1}^{L}\sum_{y_1>0,...,y_j>K_1(c)\ln n,...,y_L>0}H(y_1,...,y_L)\\
\leq& L^{O(1)} (K_1(c)\ln n)^Ln^{I_{\epsilon,r}}n^{C/L}+Ln^{I_{\epsilon,r}-2}\\
\leq &n^{I_{\epsilon,r}+\varepsilon},
\end{align*}
where the last inequality holds for any sufficiently small constant $\varepsilon=\varepsilon(L)>0$.

\end{document}